\numberwithin{equation}{theorem}
\renewcommand{\m}{\mathfrak{m}}
\DeclareMathOperator{\perfd}{perfd}
\theoremstyle{theorem}
\newtheorem*{theoremA*}{Theorem A}
\newtheorem*{theoremB*}{Theorem B}
\newtheorem*{theoremC*}{Theorem C}
\newtheorem*{theoremD*}{Theorem D}
\newtheorem*{theoremE*}{Theorem E}
\newtheorem*{theoremF*}{Theorem F}
\newtheorem*{theoremG*}{Theorem G}
\newtheorem*{theoremH*}{Theorem H}
\newtheorem*{Conjecture*}{Conjecture}
\begin{document}
\title{On complete integral closedness of the $p$-adic completion of absolute integral closure}
\author{Raymond Heitmann and Linquan Ma}
\address{Department of Mathematics\\University of Texas at Austin\\ Austin \\ TX 78712}
\address{Department of Mathematics\\ Purdue University\\ West Lafayette\\ IN 47907}
\email{heitmann@math.utexas.edu}
\email{ma326@purdue.edu}
\maketitle

\begin{abstract}
Fix a prime $p$ and let $(R,\m)$ be a Noetherian complete local domain of mixed characteristic $(0,p)$ with fraction field $K$. Let $R^+$ denote the absolute integral closure of $R$, which is the integral closure of $R$ in an algebraic closure $\overline{K}$ of $K$. The first author has shown that $\widehat{R^+}$, the $p$-adic completion of $R^+$, is an integral domain. In this paper, we prove that $\widehat{R^+}$ is completely integrally closed in $\widehat{R^+}\otimes_{R^+}\overline{K}$, but $\widehat{R^+}$ is not completely integrally closed in its own fraction field when $\dim(R)\geq 2$.
\end{abstract}

\section{Introduction}
Fix a prime number $p$ and let $(R,\m)$ denote a Noetherian complete local domain of mixed characteristic $(0,p)$. Recall that the absolute integral closure of $R$ is the integral closure of $R$ in an algebraic closure of its fraction field; such a closure is unique up to (non-unique) isomorphism and is typically denoted by $R^+$.  

In the last decade, there has been significant progress in the construction of big Cohen-Macaulay algebras in the mixed characteristic setting \cite{AndreDirectSummandConjecture,HeitmannMaBCMalgebrasVCT,GabberMSRINotes,AndreWeakFuncorialBCM,BhattCMnessAbsotluteIntegralClosure}. Most notably, the breakthrough result of Bhatt shows that $\widehat{R^+}$, the $p$-adic completion of $R^+$,  is big Cohen-Macaulay \cite{BhattCMnessAbsotluteIntegralClosure}. This result has found many applications in the study of birational geometry in mixed characteristic \cite{BMPSTWWMMP,BMPSTWWTestIdeal}. Due to its fundamental importance, it is natural to ask what ring-theoretic properties $\widehat{R^+}$ enjoys. 

It is not hard to show that $\widehat{R^+}$ is reduced: in fact, $\widehat{R^+}$ is a perfectoid ring in the sense of \cite{BhattMorrowScholzeIntegralPadicHodge}, and it is well-known that perfectoid rings are always reduced. However, in general, understanding the prime spectrum of a perfectoid ring can be difficult. Nonetheless, in this direction, a surprising result of the first author shows that $\widehat{R^+}$ is indeed an integral domain \cite{HeitmannAICSurprisinglyDomain}. A natural follow-up question is whether $\widehat{R^+}$ is (completely) integrally closed. Our main contribution towards this question is the following:

\begin{theoremA*}
Let $(R,\m)$ be a Noetherian complete local domain of mixed characteristic $(0,p)$ with fraction field $K$ and its algebraic closure $\overline{K}$. Then $\widehat{R^+}$ is completely integrally closed in $\widehat{R^+}\otimes_{R^+}\overline{K}$, while $\widehat{R^+}$ is completely integrally closed in its own fraction field if and only if $\dim(R)=1$. 
\end{theoremA*}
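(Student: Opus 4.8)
I prove Theorem~A by treating its three assertions in turn: (i) $\widehat{R^+}$ is completely integrally closed in $\widehat{R^+}\otimes_{R^+}\overline K$ for every such $R$; (ii) if $\dim R=1$ then $\widehat{R^+}$ is completely integrally closed in $\Frac(\widehat{R^+})$; and (iii) if $\dim R\geq 2$ then it is not. For (ii), observe that a one-dimensional complete local domain $R$ of mixed characteristic is module-finite over a complete discrete valuation ring $V$ of mixed characteristic, so that $\overline K$ is an algebraic closure of the complete discretely valued field $\Frac V$, $R^+=\mathcal O_{\overline K}$, and $\widehat{R^+}=\mathcal O_{\mathbb C}$ where $\mathbb C:=\widehat{\overline K}$ is the completed algebraic closure. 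By Ax--Sen--Tate, $\mathbb C$ is algebraically closed and $\widehat{R^+}$ is a rank-one valuation ring with dense value group. Such a ring is completely integrally closed in its fraction field: if $c\neq 0$ and $v(c)+n\,v(x)\geq 0$ for all $n\geq 1$, letting $n\to\infty$ forces $v(x)\geq 0$. Since $p\in R^+$ has positive value, inverting $R^+\setminus\{0\}$ already produces all of $\Frac(\widehat{R^+})=\mathbb C$; hence $\widehat{R^+}\otimes_{R^+}\overline K=\Frac(\widehat{R^+})$, and (i) and (ii) coincide and hold in this case.

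For (i) in general, the essential input is that $\widehat{R^+}$ is a perfectoid ring --- in fact the ring of power-bounded elements of the perfectoid Tate ring $\widehat{R^+}[1/p]$ --- so it is already completely integrally closed in $\widehat{R^+}[1/p]$. To reach $\widehat{R^+}\otimes_{R^+}\overline K=\widehat{R^+}[\,1/g:0\neq g\in R^+\,]$, write a would-be almost-integral element as $a/g$ with $a\in\widehat{R^+}$ and $g\in R^+\setminus\{0\}$; then $c(a/g)^n\in\widehat{R^+}$ for all $n$ means $g^n\mid c a^n$ in $\widehat{R^+}$ for all $n$. Since $R^+$ is absolutely integrally closed, $g$ admits a compatible system of $p$-power roots $g^{1/p^k}\in R^+\subseteq\widehat{R^+}$; inserting these into the divisibilities $g^n\mid ca^n$ and using the perfectoid structure of $\widehat{R^+}$ (surjectivity of Frobenius modulo $p$ together with $p$-adic completeness), one reduces matters to showing $g\mid a$, that is, $a/g\in\widehat{R^+}$. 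I expect this step to be essentially bookkeeping with roots and almost mathematics rather than conceptually difficult.

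For (iii), I would show the stronger statement that $\widehat{R^+}$ is not integrally closed in $\Frac(\widehat{R^+})$ when $\dim R\geq 2$; this suffices, since if $x=a/b$ is integral over $\widehat{R^+}$ of degree $m$ then $b^{m-1}x^n\in\widehat{R^+}$ for all $n$. After replacing $R$ by the complete regular local subring $V[[t_1,\dots,t_{d-1}]]$ over which it is module-finite --- which changes neither $R^+$ nor $\widehat{R^+}$ --- pick $t\in\mathfrak m$ so that $p,t$ is part of a system of parameters; by Bhatt's theorem $p,t$, and likewise $p,t^n$ for every $n$, is a regular sequence on the big Cohen--Macaulay algebra $\widehat{R^+}$. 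It then suffices to produce $a,b\in\widehat{R^+}$ with $b^2\mid a^2$ but $b\nmid a$: then $x:=a/b\in\Frac(\widehat{R^+})\setminus\widehat{R^+}$ satisfies $x^2\in\widehat{R^+}$, so $x$ is integral over $\widehat{R^+}$ and $b\,x^n\in\widehat{R^+}$ for all $n$. By (i), such an $x$ cannot be written as a fraction with denominator in $R^+$, so the construction must exploit the $p$-adic completion genuinely: one builds $a$ and $b$ as $p$-adically convergent series in elements of $R^+$, using the abundance of $p$-power roots of $t$, so that an identity $a^2=b^2u$ holds in $\widehat{R^+}$ for some $u$ while $b\nmid a$; the regular-sequence behaviour of $p,t,t^2,\dots$ is what lets one certify $b\nmid a$ by testing modulo $p$ and modulo the powers $t^n$.

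The main obstacle is exactly this construction in (iii): producing $a,b$ with $b^2\mid a^2$ yet $b\nmid a$ --- equivalently, pinpointing a failure of normality of $\widehat{R^+}$ that is created by $p$-adic completion --- and then rigorously verifying $b\nmid a$. This is where Bhatt's Cohen--Macaulayness of $\widehat{R^+}$ (which controls divisibility by $p$ and by the $t^n$) must be combined with the precise non-separatedness properties of the $p$-adic completion. By contrast, (ii) is elementary once $\widehat{R^+}$ is recognized as the valuation ring of a completed algebraic closure, and (i) is a fairly formal consequence of the perfectoid nature of $\widehat{R^+}$.
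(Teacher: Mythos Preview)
Your proposal has genuine gaps in both (i) and (iii).

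For (i), you assert that $\widehat{R^+}$ equals the ring of power-bounded elements of $\widehat{R^+}[1/p]$ and that this makes it completely integrally closed there. Neither step is justified. Equality with $(\widehat{R^+}[1/p])^\circ$ is not a formal consequence of $\widehat{R^+}$ being integral perfectoid, and even granting it, it only says that $\{t:\exists N,\ p^Nt^n\in\widehat{R^+}\ \forall n\}=\widehat{R^+}$; complete integral closedness requires handling an \emph{arbitrary} nonzero multiplier $c\in\widehat{R^+}$, not just powers of $p$, and the paper's own Remark in \S2 exhibits a domain $S$ with $T=S[1/p]\subsetneq\Frac(S)$ where these two notions differ. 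More seriously, your reduction ``$g^n\mid ca^n$ for all $n$ $\Rightarrow$ $g\mid a$'' via $p$-power roots of $g$ and ``perfectoid bookkeeping'' is exactly the hard content of the paper. The multiplier $c\in\widehat{R^+}$ can be pathological (Proposition~\ref{proposition: construction of bad g} shows $v_Q(c)=\infty$ can occur), and eliminating it is what forces the paper to invoke Lemma~\ref{lemma: v(g) is not infinity}, the comparison of extended plus closure with big Cohen--Macaulay closure (Lemmas~\ref{lemma: epf and BCM closure} and~\ref{lemma: epf is p almost in ideal}), the Brian\c{c}on--Skoda theorem for $R^+$ (Theorem~\ref{theorem: BrianconSkoda}), and Bhatt's theorem that $\widehat{R^+}$ is big Cohen--Macaulay. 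None of this is almost-mathematics bookkeeping.

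For (iii), you aim to prove the \emph{stronger} statement that $\widehat{R^+}$ is not integrally closed in its fraction field, by exhibiting $a,b$ with $b^2\mid a^2$ but $b\nmid a$. That is precisely what the paper leaves as an open Question at the very end: the authors do not know whether $\widehat{R^+}$ is integrally closed when $\dim R\geq 2$. Their route to (iii) is different and does not decide that question. They construct (Proposition~\ref{proposition: construction of bad g}) an explicit nonzero $g\in\widehat{R^+}$ with $v_Q(g)=\infty$ for some height-one prime $Q\ni p$ of $R^+$, via an inductive procedure adjoining roots of quadratics; hence $\widehat{R^+}/g$ is not $p$-adically separated, and Lemma~\ref{lemma: p adic closure in total integral closure} converts any $f\in\bigcap_n(g,p^n)\setminus(g)$ into an almost-integral element $f/g\notin\widehat{R^+}$ satisfying $g\cdot(f/g)^k\in\widehat{R^+}$ for all $k$. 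There is no reason to expect such $f/g$ to be integral, and the paper makes no such claim. So your approach to (iii) targets an open problem rather than the theorem at hand.
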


To study the (complete) integral closedness of an integral domain $S$ inside its fraction field, one can reduce the question to studying the (complete) integral closedness of $S$ inside its principal localization $S[1/g]$ for each nonzero element $g\in S$. In our case, we can characterize exactly when $\widehat{R^+}$ is completely integrally closed in $\widehat{R^+}[1/g]$ in terms of the $p$-adic topology of the quotient ring $\widehat{R^+}/g$.

\begin{theoremB*}
Let $(R,\m)$ denote a Noetherian complete local domain of mixed characteristic $(0,p)$. For a nonzero element $g\in \widehat{R^+}$, the following conditions are equivalent:
\begin{itemize}
  \item[(a)] $\widehat{R^+}/g$ is $p$-adically separated.
  \item[(b)] $\widehat{R^+}$ is completely integrally closed in $\widehat{R^+}[1/g]$.
\end{itemize}
\end{theoremB*}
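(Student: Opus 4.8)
The plan is to reformulate condition (b) as the statement that a ``complete integral closure'' of the principal ideal $gS$ (with $S:=\widehat{R^+}$) equals $gS$, and then to compare this ideal with the $p$-adic closure of $gS$; the comparison is where the perfectoid structure of $S$ and the hands-on techniques of \cite{HeitmannAICSurprisinglyDomain} enter.

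\emph{Step 1: reformulation.} For $0\neq g\in S$, set $gS^{\ast}:=\{a\in S:\text{there is }0\neq c\in S\text{ with }ca^{m}\in g^{m}S\text{ for all }m\ge 1\}$; a short check shows $gS^{\ast}$ is an ideal containing $gS$. An element $a/g^{n}\in S[1/g]$ is almost integral over $S$ precisely when $ca^{m}\in g^{nm}S$ for all $m\ge1$ and some fixed $0\neq c\in S$, and --- since $S$ is a domain --- a one-step induction reduces the whole of condition (b) to the single equality $gS^{\ast}=gS$: indeed, if such an $a$ has $n\ge2$ then already $ca^{m}\in g^{m}S$, so $a\in gS^{\ast}=gS$, and writing $a=ga'$ and cancelling $g^{m}$ replaces $n$ by $n-1$. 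On the other hand, (a) is exactly the equality $gS^{-}=gS$, where $gS^{-}:=\bigcap_{k\ge1}(gS+p^{k}S)$ is the $p$-adic closure of $gS$ in $S$ (and likewise $(g^{m}S)^{-}:=\bigcap_{k\ge1}(g^{m}S+p^{k}S)$). Since $gS\subseteq gS^{\ast}$ and $gS\subseteq gS^{-}$ always, Theorem~B follows once one proves, for $S=\widehat{R^{+}}$, the two inclusions
\[
gS^{\ast}\subseteq gS^{-}\qquad\text{and}\qquad gS^{-}\subseteq gS^{\ast},
\]
the first yielding (a)$\Rightarrow$(b) and the second yielding (b)$\Rightarrow$(a).

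\emph{Step 2: the two inclusions, and the main obstacle.} Both inclusions genuinely require the structure of $\widehat{R^{+}}$ rather than formal manipulation: the analogue of $gS^{\ast}\subseteq gS^{-}$ fails already in $\mathbb{Z}_{p}[t^{2},t^{3}]$ with $g=t^{2}$, and the analogue of $gS^{-}\subseteq gS^{\ast}$ fails in $\mathbb{Z}_{p}[x]$ with $g=px-1$. To prove $gS^{\ast}\subseteq gS^{-}$ one must convert, for $a\in gS^{\ast}$, the exact divisibilities $ca^{m}\in g^{m}S$ into the $p$-adic approximations $a\in gS+p^{k}S$; here one exploits that $S$ is perfectoid --- Frobenius is surjective modulo $p$, and $S$ contains a compatible system $p^{1/p^{j}}$ of $p$-power roots of $p$, which links the ideals $gS,\,g^{p}S,\,g^{p^{2}}S,\dots$ tightly modulo powers of $p$. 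To prove $gS^{-}\subseteq gS^{\ast}$ one notes that $a\in gS^{-}$ forces $a^{m}\in(g^{m}S)^{-}$ for all $m$, so it suffices to produce a single nonzero $c\in S$ that annihilates, simultaneously for all $m$, the $p$-divisible part $(g^{m}S)^{-}/(g^{m}S)$ of $S/g^{m}S$; perfectoidness makes these $p$-divisible parts ``almost zero'' (annihilated by every $p^{1/p^{j}}$), and the substantive point is to upgrade this to annihilation by a single fixed root $p^{1/p^{j}}$, uniformly in $m$. This conversion between the ``almost'', root-indexed information coming from perfectoidness and a single exact relation is, I expect, the heart of the proof: it is the step that should require the explicit root-manipulations of \cite{HeitmannAICSurprisinglyDomain} and genuinely use that $\widehat{R^{+}}$ is a domain.
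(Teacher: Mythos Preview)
Your Step~1 reformulation is correct and useful: condition (b) is indeed equivalent to $gS^{\ast}=gS$, and condition (a) to $gS^{-}=gS$, so the theorem does reduce to the two inclusions you name. The problem is that Step~2 is not a proof of either inclusion, and you have inverted the difficulty of the two directions.

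The inclusion $gS^{-}\subseteq gS^{\ast}$ (yielding (b)$\Rightarrow$(a)) is completely elementary and holds in any $p$-adically complete domain $S$; it uses neither perfectoidness nor the results of \cite{HeitmannAICSurprisinglyDomain}. Given $a\in gS^{-}$, write $a=\lim a_{n}g_{n}$ with $g=\lim g_{n}$; from $g_{n}(a_{n+1}-a_{n})\in p^{n+1}S$ one checks that for each fixed $k$ the sequence $a_{n}^{k}g_{n}$ is Cauchy, and its limit $h$ satisfies $g^{k-1}h=a^{k}$, i.e.\ $g\cdot a^{k}\in g^{k}S$. So $c=g$ works uniformly in $k$, and your detour through ``$(g^{m}S)^{-}/g^{m}S$ is almost zero'' is both unnecessary and unjustified (that quotient need not be killed by $p^{1/p^{j}}$ in general).

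The hard direction is $gS^{\ast}\subseteq gS^{-}$ (yielding (a)$\Rightarrow$(b)), and your sketch---``Frobenius links $gS,\,g^{p}S,\dots$ modulo powers of $p$''---is far too thin to carry it. The paper's argument requires genuinely deep input: one first treats $g=p$ and then $g$ with $p,g$ a regular sequence, reducing the general case via a colon-ideal lemma. In the regular-sequence case, from $hf^{m}\in g^{m}S$ one passes to approximations in $R^{+}$, applies a Brian\c{c}on--Skoda theorem for $R^{+}$ to land in a two-generated ideal, then uses the characterization of extended plus closure via perfectoid big Cohen--Macaulay $R^{+}$-algebras together with the key lemma $(p^{1/p^{\infty}})\bigl((x,y)R^{+}\bigr)^{\mathrm{epf}}\subseteq (x,y)R^{+}$, whose proof in turn rests on Bhatt's theorem that $\widehat{R^{+}}$ is balanced big Cohen--Macaulay. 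One also needs the existence of a valuation on $\widehat{R^{+}}$ with $v(h)<\infty$ to get the ``test element'' behaviour. None of these ingredients appears in your outline, and I do not see how to replace them by the elementary perfectoid manipulations you suggest.
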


In fact, in our Theorem~\ref{theorem: completeIntegrallyClosed}, we provide several other conditions, all equivalent to (a) and (b) above.  It is not hard to show that if $g\in R^+$, then $\widehat{R^+}/g$ is $p$-adically separated. However, when $\dim(R)\geq 2$, we construct explicit elements $g\in\widehat{R^+}$ such that $\widehat{R^+}/g$ is not $p$-adically separated (see Proposition~\ref{proposition: construction of bad g}). Thus Theorem A follows directly from Theorem B together with such construction.

This article is organized as follows: In Section 2 we recall some basic concepts and results that will be relevant. In Section 3 we collect some results on perfectoid rings, big Cohen-Macaulay algebras, and mixed characteristic closure operations. We prove Lemma~\ref{lemma: epf is p almost in ideal} on extended plus closure that will be needed in the proof of (a)$\Rightarrow$(b) in Theorem B. Finally in Section 4, we prove Theorem A and Theorem B.

\subsection*{Acknowledgement} The second author was supported by NSF Grant DMS \#2302430 and \#2424441. This material is based upon work supported by the NSF under Grant No. DMS-1928930 and by the Alfred P. Sloan Foundation under grant G-2021-16778, while the second author was in residence at the Simons Laufer Mathematical Sciences Institute (formerly MSRI) in Berkeley, California, during the Spring 2024 semester. The authors would like to thank Dimitri Dine and the referee for their comments on this article.

\section{Preliminaries}
Throughout this paper, all rings are commutative with multiplicative identity $1$. In this section, we collect some basic notations and results that will be used.

\subsection{Completion} Recall that if $J$ is an ideal in a ring $S$ and $M$ is an $S$-module, then the {\it $J$-adic completion} of $M$ is defined as $\varprojlim_n M/J^nM$. We are primarily focused in the case where $J=(p)$, the principal ideal generated by $p$, and $M=S$. We will use $\widehat{S}$ to denote the $p$-adic completion of $S$ throughout this article. Note that $\widehat{S}$ is always $p$-adically complete \cite[\href{https://stacks.math.columbia.edu/tag/05GG}{Tag 05GG}]{stacks-project}; but beyond the Noetherian setting, the quotient of a $p$-adically complete ring may not be $p$-adically complete. In fact, if $I$ is an ideal of a $p$-adically complete ring $S$, then $S/I$ need not be $p$-adically separated and the closure of $I$ in the $p$-adic topology of $S$ is $\cap_n(I, p^n)$. 

There is a closely related notion called {\it derived completion}. We briefly recall the definition of derived $p$-completion. Given any Abelian group $M$, the derived $p$-completion of $M$ can be defined as $R\varprojlim_n M\otimes^L_{\mathbb{Z}}\mathbb{Z}/p^n$. We refer the reader to \cite[\href{https://stacks.math.columbia.edu/tag/091N}{Tag 091N}]{stacks-project} for more general definitions and properties of derived completion. Here we simply point out that when $M$ has bounded $p$-power torsion, i.e., $\Gamma_{(p)}M= 0:_M p^N$ for some $N\in\mathbb{N}$, then derived $p$-completion of $M$ agrees with the usual $p$-adic completion of $M$, see \cite[\href{https://stacks.math.columbia.edu/tag/0BKF}{Tag 0BKF}]{stacks-project}.

\subsection{Complete integral closure}

Let $S\subseteq T$ be an extension of integral domains with the same fraction field. An element $t\in T$ is {\it almost integral} over $S$ if there exists a nonzero $s\in S$ so that $st^n\in S$ for all $n\in \mathbb{N}$. The set of all elements in $T$ almost integral over $S$ forms a ring and is called the {\it complete integral closure} of $S$ in $T$. We say $S$ is completely integrally closed in $T$ if its complete integral closure in $T$ agrees with $S$. 
If $S$ is Noetherian, then every almost integral element is integral over $S$ and thus the complete integral closure of $S$ in $T$ agrees with the usual integral closure. We refer the readers to \cite[Exercise 2.26]{SwansonHunekeIntegralClosure} for other basic properties of complete integral closure.

\begin{remark}
For an arbitrary ring extension $S\subseteq T$, one can consider the set of all elements $t\in T$ such that $\{t^n\}_{n\in\mathbb{N}}$ is contained in a finitely generated $S$-submodule of $T$. It is straightforward to check that this set forms a ring $S'$ and we have $S\subseteq S'\subseteq T$.\footnote{In \cite{GilmerHeinzerCompleteIntegralClosure}, $S'$ is called the complete integral closure of $S$ in $T$, and in \cite[Chapter 5]{BhattPerfectoidNotes}, $S'$ is called the total integral closure of $S$ in $T$.}
When $S$ is an integral domain and $T$ is the fraction field of $S$, it is easy to see that $S'$ agrees with the complete integral closure we defined above. However, in general they
%
 are different even when $S$ is an integral domain and $T$ is a subring of the fraction field of $S$. For example consider $S=\mathbb{Z}_p+x\mathbb{Q}_p[x]\subseteq T:=S[1/p]$; then $1/p\in T$ is almost integral over $S$ (as $x/p^n\in S$ for all $n\in\mathbb{N}$) and hence $1/p$ is in the complete integral closure of $S$ in $T$ as we defined above, but $1/p\notin S'$ (as there is no fixed power of $p$ that multiplies $1/p^n$ into $S$ for all $n\in\mathbb{N}$).  
\end{remark}

\subsection{Absolute integral closure} Let $S$ be an integral domain with fraction field $L$. The absolute integral closure of $S$, denoted by $S^+$, is the integral closure of $S$ in an algebraic closure $\overline{L}$ of $L$ (or equivalently, $S^+$ is the direct limit of all module-finite domain extensions of $S$ in $\overline{L}$). 
We are primarily interested in the case that $S=R$ is a Noetherian complete local domain of mixed characteristic $(0,p)$. In this case, it is well-known (which essentially follows from Hensel's lemma) that $R^+$ is local. Thus its $p$-adic completion $\widehat{R^+}$ is also local. We will use $\m_{R^+}$ and $\m_{\widehat{R^+}}$ to denote the unique maximal ideals of $R^+$ and $\widehat{R^+}$ respectively. In \cite{HeitmannAICSurprisinglyDomain}, the first author proves that $\widehat{R^+}$ is an integral domain. The next lemma follows from \cite[Lemma 1.4]{HeitmannAICSurprisinglyDomain}, but we give a short and self-contained proof.

\begin{lemma}
\label{lemma: v(g) is not infinity}
Let $(R,\m)$ be a Noetherian complete local domain of mixed characteristic $(0,p)$. Then for any nonzero element $g\in\widehat{R^+}$, there exists an $\mathbb{R}$-valuation $v$ on $\widehat{R^+}$ centered at $\m_{\widehat{R^+}}$ such that $v(g)\neq \infty$ (i.e., $g$ is not in the support of $v$). 
\end{lemma}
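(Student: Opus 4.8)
The plan is to move the problem from $\widehat{R^+}$ down to $R^+$ by a $p$-adic approximation, build a monomial valuation on a Cohen subring of $R$ that sees $g_0$ with small value relative to $p$, extend it to $\overline{K}$ in a controlled way, and finally push it up to the $p$-adic completion.

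\emph{Step 1 (reduction to $R^+$).} Since $\widehat{R^+}$ is $p$-adically complete it is $p$-adically separated, so $g\notin p^{N}\widehat{R^+}$ for some $N\ge 1$. As $\widehat{R^+}/p^{N}\widehat{R^+}=R^+/p^{N}R^+$, pick $g_0\in R^+$ with $g-g_0\in p^{N}\widehat{R^+}$; then $g_0\notin p^{N}R^+$ and $g=g_0+p^{N}h$ with $h\in\widehat{R^+}$. By Cohen's structure theorem choose a complete regular local subring $A=V[[x_{1},\dots,x_{d-1}]]\subseteq R$ over which $R$ (hence $R^+$) is module-finite, where $V$ is a complete DVR with uniformizer $\pi$ and $p=\pi^{e}u$, $u\in V^{\times}$, $e\ge 1$; thus $R^+$ is the integral closure of $A$ in $\overline{K}$. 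Let $f(T)=T^{n}+f_{1}T^{n-1}+\dots+f_{n}\in A[T]$ be the minimal polynomial of $g_0$ over $\Frac(A)$. The numerical fact I need is that $\ord_{\pi}(f_{i_{0}})<i_{0}Ne$ for some index $i_{0}$: otherwise $\pi^{iNe}\mid f_{i}$ in the UFD $A$ for all $i$, so $f_{i}/p^{iN}\in A$, and dividing $f(g_0)=0$ by $p^{nN}$ exhibits $g_0/p^{N}$ as integral over $A$, whence $g_0/p^{N}\in R^+$, contradicting $g_0\notin p^{N}R^+$.

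\emph{Step 2 (the valuation on $\overline{K}$).} Let $w$ be the monomial valuation on $A$ with $w(\pi)=1$ and $w(x_{j})=b_{j}$ for positive reals $b_{j}$ chosen small enough that $w(f_{i_{0}})<\ord_{\pi}(f_{i_{0}})+1\le i_{0}Ne$; then $w$ is centered at $\m_{A}$ and $w(p)=e$. Extend $w$ to a valuation $\widetilde w$ on $\overline{K}$. Because the $\Frac(A)$-conjugates of $g_0$ are exactly the roots of $f$ and $\mathrm{Gal}(\overline{K}/\Frac(A))$ permutes them transitively, I may choose the extension so that $\widetilde w(g_0)$ equals the least value $\widetilde w$ takes on a root of $f$, and by the Newton polygon this least value is $\min_{i}w(f_{i})/i$. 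Hence
\[
\widetilde w(g_0)\ \le\ \frac{w(f_{i_{0}})}{i_{0}}\ <\ Ne\ =\ N\,\widetilde w(p).
\]
The value group of $\widetilde w$ lies in $\mathbb{R}$, so $\widetilde w$ is an $\mathbb{R}$-valuation; and since the center of $\widetilde w$ on $A$ is $\m_{A}$, its center on $R^+$ is a prime lying over the maximal ideal $\m_{A}$, hence maximal, hence $\m_{R^+}$ (as $R^+$ is local).

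\emph{Step 3 (passage to $\widehat{R^+}$ and conclusion).} Using $R^+\cap p^{m}\widehat{R^+}=p^{m}R^+$ together with $\widetilde w(p)=e>0$: for $x\in\widehat{R^+}$ and any $x_{m}\in R^+$ with $x-x_{m}\in p^{m}\widehat{R^+}$ one gets $\widetilde w(x_{m+1}-x_{m})\ge me$, and an ultrametric argument shows $(\widetilde w(x_{m}))_{m}$ is eventually constant or tends to $\infty$; setting $v(x):=\lim_{m}\widetilde w(x_{m})\in\mathbb{R}_{\ge 0}\cup\{\infty\}$ defines an $\mathbb{R}$-valuation $v$ on $\widehat{R^+}$ restricting to $\widetilde w$ on $R^+$. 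Because $\widehat{R^+}$ is local with $p\in\m_{\widehat{R^+}}$, because $v(p)=e>0$, and because $v>0$ on $\m_{R^+}$, the locus $\{x:v(x)>0\}$ is exactly $\m_{\widehat{R^+}}$, i.e. $v$ is centered at $\m_{\widehat{R^+}}$. Finally $v(p^{N}h)\ge N v(p)=Ne>\widetilde w(g_0)=v(g_0)$, so $v(g)=v(g_0+p^{N}h)=v(g_0)<\infty$, as required.

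\emph{Main obstacle.} The crux is Step 2: the extension of $w$ to $\overline{K}$ must be chosen so that $g_0$ lands on a root of $f$ of minimal value — a careless extension can give $\widetilde w(g_0)\ge Ne$ and then the final cancellation in Step 3 fails — so the Newton polygon together with the transitivity of the Galois action is essential. The second delicate point is that the limit construction in Step 3 really yields a valuation on the non-Noetherian ring $\widehat{R^+}$ that is centered at the full maximal ideal; this rests on $\widehat{R^+}/p^{m}=R^+/p^{m}$. The remaining verifications (that a monomial valuation with positive weights is centered at $\m_{A}$, the ultrametric bookkeeping for the limit, etc.) are routine.
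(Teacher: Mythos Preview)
Your proof is correct and follows essentially the same strategy as the paper's: approximate $g$ by $g_0\in R^+$, take the minimal polynomial of $g_0$ over a Noether--Cohen normalization, build a monomial valuation whose weight on the $x_i$ is small enough that some coefficient has small normalized value, and then use the Galois action on the roots to arrange that $g_0$ itself gets value strictly below $v(p^N)$, whence $v(g)=v(g_0)<\infty$. The only cosmetic differences are that the paper normalizes to $N=1$ (assuming $p\nmid g$), takes an unramified coefficient ring, and phrases the root-value bound via $\sum v(b_i)\le v(a_j)$ rather than via the Newton polygon; these are equivalent formulations of the same argument.
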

\begin{proof}
Let $A:=C_k[[x_2,\dots,x_d]]\hookrightarrow R$ be a Noether-Cohen normalization of $R$ (where $C_k$ is a coefficient ring of $R$ and $k$ denotes the residue field of $R$). Without loss of generality, we may assume that $p \nmid g$ and we write $g=g_0+ph$ where $g_0\in R^+$ and $h\in\widehat{R^+}$. Let 
$$T^n+ a_1T^{n-1}+ \cdots + a_{n-1}T + a_n$$
be the minimal polynomial of $g_0$ over $A$. Since $p\nmid g$, we have $p\nmid g_0$ and thus $p^j\nmid a_j$ for some $j$. Let $p^c$ be the highest power of $p$ that divides $a_j$. There exists $m$ such that 
{ $$a_j\in \left(p^c(x_2,\dots,x_d)^m + p^{c+1}A\right) \backslash \left(p^c(x_2,\dots,x_d)^{m+1}+p^{c+1}A\right).$$}
%

We now define a valuation $v_A$ on $A$ by setting $v_A(p)=1$ and $v_A(x_i)={1}/{2m}$ for all $i$. It follows that $v_A(a_j)=c+1/2<j$. 
%
{Suppose $b_1,\dots,b_n$ is the full set of conjugates of $g_0$ in the Galois closure $S$ of $A[g_0]$ in $R^+$.}
We know that $a_j=(-1)^{j}e_j(b_1,\dots,b_n)$ where $e_j$ is the $j$-th elementary symmetric polynomial in $n$-variables. It follows that, after reordering $b_1,\dots,b_n$ and extending $v_A$ from $A$ to $S$, we may assume that $\sum_{i=1}^{j}v_S(b_j)\leq c+1/2 <j$. Thus there exists $i$ such that $v_S(b_i)<1$. Since $b_i$ is a conjugate of $g_0$, there exists (a possibly different) valuation $v$ on $S$ extending $v_A$ such that $v(g_0)<1$. Hence for any extension of $v$ to $\widehat{R^+}$, we have $v(g){=}v(g_0+ph)= v(g_0)<1$. 
\end{proof}

\section{Perfectoid rings and mixed characteristic closure operations}

In this section, we first recall some basic facts about perfectoid rings, and then we collect some results about closure operations induced by perfectoid big Cohen-Macaulay algebras and connections with extended plus closure. We start with the definition of perfectoid rings following \cite{BhattIyengarMaRegularRingsPerfectoidAlgebras} (which is equivalent to the definition in \cite{BhattMorrowScholzeIntegralPadicHodge}). 

\begin{definition}
A ring $S$ is {\it perfectoid} if it satisfies the following:
\begin{itemize}
    \item $S$ is $p$-adically complete.
    \item There exists an element $\varpi\in S$ so that $p=u\varpi^p$ for some unit $u\in S$.
    \item The Frobenius map on $S/p$ is surjective.
    \item The kernel of the Fontaine's map $\theta$: $W(S^\flat)\to S$ is principal,\footnote{We refer to \cite[Section 3]{BhattMorrowScholzeIntegralPadicHodge} for a detailed definition of $\theta$: essentially, this is the unique map lifting the natural surjection $S^\flat\to S/p$.} where $S^\flat := \varprojlim_F S/p$.
\end{itemize}
\end{definition}

In our context, $S$ will always be $p$-torsion free and $p\in S$ admits a $p$-power root $\varpi=p^{1/p}$. In this case, $S$ is perfectoid if and only if it is $p$-adically complete and the Frobenius map $S/\varpi\to S/p$ is bijective, see \cite[Lemma 3.10]{BhattMorrowScholzeIntegralPadicHodge}. 

Let $(R,\m)$ be a Noetherian complete local domain of mixed characteristic $(0,p)$ throughout the rest of this section. An $R$-algebra $B$ is called (balanced) {\it big Cohen-Macaulay} if every system of parameters of $R$ is a regular sequence on $B$ and $B\neq \m B$. The existence of such algebras in mixed characteristic has been a long standing open question and was eventually solved by Andr\'{e} \cite{AndreDirectSummandConjecture}, who later proved that there exist perfectoid balanced big Cohen-Macaulay $R^+$-algebras \cite{AndreWeakFuncorialBCM}. Bhatt proved the following remarkable result that provides an explicit construction of such algebras.

\begin{theorem}[{\cite[Corollary 5.11]{BhattCMnessAbsotluteIntegralClosure}}]
\label{theorem: BB}
$\widehat{R^+}$ is a (perfectoid) balanced big Cohen-Macaulay $R$-algebra.
\end{theorem}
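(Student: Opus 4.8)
We refer to \cite[Corollary 5.11]{BhattCMnessAbsotluteIntegralClosure} for the proof; here I only indicate the shape the argument would take. Perfectoidness of $\widehat{R^+}$ is the soft part: $\widehat{R^+}$ is $p$-adically complete by construction, it is $p$-torsion free (since $R^+$ is a domain with $p\neq 0$, and the $p$-completion of a $p$-torsion-free module is again $p$-torsion free and satisfies $\widehat{R^+}/p\widehat{R^+}\cong R^+/pR^+$), $p$ admits a compatible system of $p$-power roots in $R^+\subseteq\widehat{R^+}$ because $R^+$ is absolutely integrally closed, and Frobenius on $R^+/p$ is surjective for the same reason; by \cite[Lemma 3.10]{BhattMorrowScholzeIntegralPadicHodge} these facts already force $\widehat{R^+}$ to be perfectoid. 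All the content is in the big Cohen-Macaulay assertion.

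Because $\widehat{R^+}$ is local we have $\m\widehat{R^+}\neq\widehat{R^+}$, so the plan is to show that $p,x_2,\dots,x_d$ is a regular sequence on $\widehat{R^+}$ for one system of parameters of $R$ with $p$ as first member (legitimate, since $p$ is a nonzerodivisor and $\dim(R/p)=d-1$), and then to promote this to every system of parameters by the standard argument available when the algebra is $p$-adically complete and $p$ lies in the parameter ideal. As $p$ is a nonzerodivisor on $\widehat{R^+}$ and $\widehat{R^+}/p\widehat{R^+}\cong R^+/pR^+$, this reduces to showing that the images of $x_2,\dots,x_d$ form a regular sequence on $R^+/pR^+$, i.e.\ that $R^+/pR^+$ is Cohen-Macaulay of dimension $d-1$ over $R/pR$; equivalently, $H^i_\m(\widehat{R^+})=0$ for $i<d$.

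Here positive-characteristic methods enter. By Hochster--Huneke (extended to excellent domains) the absolute integral closure of a Noetherian domain of characteristic $p$ is a balanced big Cohen-Macaulay, hence Cohen-Macaulay, algebra. The obstruction is that $R/pR$ need not be a domain, so one cannot apply this verbatim to $R^+/pR^+$; instead one uses Andr\'e's perfectoid Abhyankar lemma and the almost purity theorem to compare $R^+/pR^+$ with these characteristic-$p$ absolute integral closures, obtaining that $R^+/pR^+$ is \emph{almost} Cohen-Macaulay with respect to $p^{1/p^\infty}$, i.e.\ the local cohomology obstructions in degrees $<d-1$ are killed by arbitrarily small powers of $p$.

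The hard part, and the step I expect to be the main obstacle, is upgrading this ``almost'' statement to an honest one: almost-vanishing produces local cohomology classes annihilated by $p^{1/p^n}$ for all $n$ which a priori survive in $R^+$, and $p$-adic completeness alone does not kill such classes. Bhatt's route is to exploit that $\widehat{R^+}$ arises from a perfectoidization construction with good descent properties and to apply descent-theoretic tools for perfectoid rings ($v$- and arc-descent and a Riemann-extension-type vanishing theorem, in the style of Bhatt--Scholze), together with Andr\'e's flatness lemma controlling the relevant cotangent complexes, to show that after the appropriate colimit of module-finite extensions is $p$-completed the obstruction classes genuinely vanish. This is the technical core of \cite{BhattCMnessAbsotluteIntegralClosure}, and there is no shortcut. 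Granting it, $p,x_2,\dots,x_d$ is a regular sequence on $\widehat{R^+}$, so $H^i_\m(\widehat{R^+})=0$ for $i<d$, and together with $\m\widehat{R^+}\neq\widehat{R^+}$ this is precisely the assertion that $\widehat{R^+}$ is a balanced big Cohen-Macaulay $R$-algebra.
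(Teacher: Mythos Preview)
The paper does not prove this theorem at all: it is stated as a citation of \cite[Corollary~5.11]{BhattCMnessAbsotluteIntegralClosure} with no accompanying argument. Your proposal takes the same approach---you explicitly defer to Bhatt's paper for the proof---so in that sense the two agree; the extended sketch you provide of the perfectoidness check, the reduction to Cohen--Macaulayness of $R^+/pR^+$, and the almost-to-honest upgrade via prismatic/descent machinery is additional exposition beyond what the paper offers, but it is broadly accurate as a description of the shape of Bhatt's argument and is not in conflict with anything in the paper.
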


We next recall that for any integral domain $S$ with $p$ in its Jacobson radical (e.g., $S$ is $p$-adically complete) and an ideal $I\subseteq S$, the (full) {\it extended plus closure} of $I$ is the collection of all elements $z\in S$ such that there exists a nonzero $c\in S$ so that $c^{1/n}z\in (I, p^n) S^+$ for all $n\in\mathbb{N}$, see \cite{HeitmannExtensionsPlusClosure}.  We will need the following result relating the extended plus closure and closures induced by perfectoid big Cohen-Macaulay algebras, this is essentially contained in \cite[Proof of Proposition 5.2.5]{CaiLeeMaSchwedeTuckerPerfdSignature}. 

%
%
\begin{lemma}
\label{lemma: epf and BCM closure}
Let $I\subseteq R$ be an ideal that contains a power of $p$. Then $I^{epf}=IB\cap R$ for all sufficiently large perfectoid big Cohen-Macaulay $R^+$-algebras $B$. 
\end{lemma}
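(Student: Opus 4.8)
The plan is to prove the two inclusions $I^{epf}\subseteq IB\cap R$ and $IB\cap R\subseteq I^{epf}$ separately, the first valid for every perfectoid big Cohen-Macaulay $R^+$-algebra $B$ and the second requiring $B$ to be sufficiently large, then combine them. Throughout I will use that $I$ contains a power of $p$, say $p^N\in I$, which ensures that the $p$-adic topology on $R/I$ is the discrete one and lets me freely pass between $(I,p^n)$ and $I$ for $n\geq N$.

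For the inclusion $I^{epf}\subseteq IB\cap R$, let $z\in I^{epf}$, so there is a nonzero $c\in R$ with $c^{1/n}z\in (I,p^n)R^+$ for all $n$. Fix a perfectoid big Cohen-Macaulay $R^+$-algebra $B$. Mapping the relations into $B$ and using that $(I,p^n)R^+ B\subseteq (I,p^n)B$, we get $c^{1/n}z\in (I,p^n)B$ for all $n$. The key point is that $B$ is perfectoid, hence $p$-adically complete and reduced, and $c^{1/n}$ ``converges to a unit'' in the sense that allows one to cancel it: more precisely, since $c^{1/n}\in B$ for all $n$, one argues as in the standard perfectoid manipulation (\cf the proof cited from \cite{CaiLeeMaSchwedeTuckerPerfdSignature}) that $z\in (I,p^n)B$ for all $n$, and then $p$-adic completeness of $B$ together with $p^N\in I$ forces $z\in IB$. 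Hence $z\in IB\cap R$. I expect this direction to be comparatively routine.

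For the reverse inclusion $IB\cap R\subseteq I^{epf}$, this is where the phrase ``for all sufficiently large $B$'' does the work, and this is the main obstacle. The idea is to choose $B$ large enough that it ``sees'' enough of $R^+$: concretely, one wants $B$ to be (a perfectoid big Cohen-Macaulay algebra) constructed so that the extended-plus-closure witnesses become visible inside $B$. I would invoke the construction from \cite{CaiLeeMaSchwedeTuckerPerfdSignature} (Proposition 5.2.5 and its proof): given $z\in IB\cap R$ for the relevant $B$, one extracts from the relation $z\in IB$, together with the almost-mathematics / perfectoid structure and the fact that $B$ is built as a (perfectoid completion of a) large $R^+$-algebra, an element $c$ (coming from a ``$p$-almost'' or ``$c$-almost'' version of membership, as in Lemma~\ref{lemma: epf is p almost in ideal}) and compatible $p^n$-approximate containments $c^{1/n}z\in (I,p^n)R^+$. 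The subtle points are: (i) making precise what ``sufficiently large'' means — presumably any $B$ that contains a fixed witness for each relevant relation, or any $B$ receiving a map from a canonical ``full'' perfectoid BCM algebra such as the one underlying $R^{+,\wedge}$ or André's functorial construction; and (ii) controlling the single element $c$ uniformly across all $n$, which is exactly the content one borrows from the cited proof.

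Finally, I would assemble the two inclusions. Since the first inclusion holds for every perfectoid BCM $R^+$-algebra $B$ and the second holds for all sufficiently large such $B$, for all sufficiently large $B$ we get $I^{epf}=IB\cap R$, as claimed. I would also remark that the hypothesis $p^N\in I$ is used precisely to guarantee that $\bigcap_n (I,p^n)R = I$ (so that the $p$-adic approximations collapse to honest membership on the $R$-side) and that derived and classical $p$-completions agree for the modules in play, matching the setup recalled in Section~2.
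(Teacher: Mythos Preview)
You have the roles of the two inclusions exactly reversed, and this creates a genuine gap.

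Your argument for $I^{epf}\subseteq IB\cap R$ claims to work for \emph{every} perfectoid big Cohen--Macaulay $R^+$-algebra $B$, by ``cancelling'' $c^{1/n}$ via a ``standard perfectoid manipulation.'' No such manipulation exists: having $c^{1/n}z\in IB$ for all $n$ with $c$ a fixed nonzero element of $R$ does \emph{not} force $z\in IB$ in an arbitrary perfectoid ring. This is precisely why the paper does not argue this way. Instead, for each $x\in I^{epf}$ the paper invokes \cite[Corollary~2.4.3]{CaiLeeMaSchwedeTuckerPerfdSignature} to build a specific perfectoid big Cohen--Macaulay $R^+$-algebra $B(x)$ with $x\in IB(x)$, and then uses the domination result \cite[Theorem~4.9]{MaSchwedeSingularitiesMixedCharBCM} to find one sufficiently large $B$ mapping from every $B(x)$. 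So ``sufficiently large'' is needed for this inclusion, not the other one.

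Conversely, the inclusion $IB\cap R\subseteq I^{epf}$ in the paper works for \emph{any} perfectoid big Cohen--Macaulay $R^+$-algebra $B$, not only sufficiently large ones. The key input is \cite[Lemma~5.1.6]{CaiLeeMaSchwedeTuckerPerfdSignature}: from $x\in IB\cap R$ one gets $g^{1/p^e}x\in IR^+$ for all $e$, where $g$ is a fixed nonzero discriminant of a Noether--Cohen normalization $A\hookrightarrow R$. This single uniform $c=g$ immediately gives $x\in I^{epf}$; there is no need to enlarge $B$, and the subtlety you anticipated (controlling a single $c$ across all $n$) is handled by this discriminant, not by the size of $B$. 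Your instinct that the hypothesis $p^N\in I$ collapses $(I,p^n)$ to $I$ is correct and is used, but the logical structure of which inclusion is delicate is the opposite of what you wrote.
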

\begin{proof}
If $x\in I^{epf}$, then as $I$ contains a power of $p$, there exists a nonzero $c\in R$ so that $c^{1/n}x\in IR^+$ for all $n\in\mathbb{N}$. By \cite[Corollary 2.5.3]{CaiLeeMaSchwedeTuckerPerfdSignature}, there exists a perfectoid big Cohen-Macaulay $R^+$-algebra $B(x)$ so that $x\in IB(x)$. Now apply \cite[Theorem 4.9]{MaSchwedeSingularitiesMixedCharBCM} to find a sufficiently large perfectoid big Cohen-Macaulay $R^+$-algebra $B$ dominating each such $B(x)$, we obtain that $I^{epf}\subseteq IB\cap R$. 

Conversely, suppose $x\in IB\cap R$. Then by \cite[Lemma 5.1.6, $M_2=M_4$]{CaiLeeMaSchwedeTuckerPerfdSignature} applied to the finitely generated module $R/I$, we find that $g^{1/p^e} x \in IR^+$ for all $e\in\mathbb{N}$ (where $g$ is a nonzero discriminant element coming from a Noether-Cohen normalization $A\to R$). This clearly implies that $x\in I^{epf}$.  
\end{proof}
To prove the next lemma, we follow some notations as in \cite{CaiLeeMaSchwedeTuckerPerfdSignature} (see also \cite{BMPSTWWPerfdPure}). Let $C_k$ be a coefficient ring of $R$ (where $k$ denotes the residue field of $R$) and fix an inclusion $C_k\hookrightarrow W(k^{1/p^\infty})$, where the latter is the ring of Witt vectors of the perfect field $k^{1/p^\infty}$. Let $A:= C_k[[x_2,\dots, x_d]]\hookrightarrow R$ be any Noether-Cohen normalization of $R$. We set $A_\infty$ to be the $p$-adic completion of
$$(A\widehat{\otimes}_{C_k}W(k^{1/p^\infty}))[p^{1/p^\infty}, x_2^{1/p^\infty},\dots, x_d^{1/p^\infty}].$$ 
Then $A_\infty$ is a perfectoid ring and we set
$R^{A_\infty}_{\perfd}:=(A_\infty\otimes_AR)_{\perfd}$ to 
be the perfectoidization of $A_\infty\otimes_AR$ in the sense of Bhatt--Scholze \cite[Sections 7 and 8]{BhattScholzepPrismaticCohomology}. Note that since we have a (non-canonical) map $A_\infty\to \widehat{R^+}$, by the universal property of the perfectoidization functor, we have a map $R^{A_\infty}_{\perfd}\to \widehat{R^+}$.


\begin{lemma}
\label{lemma: p is test element}
Suppose $R[1/p]$ is regular. Then for every ideal $I\subseteq R$ that contains a power of $p$, we have $(p^{1/p^\infty})I^{epf}\subseteq IR^+$. 
\end{lemma}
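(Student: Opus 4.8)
The plan is to transfer the statement into the perfectoidization $R^{A_\infty}_{\perfd}$, where the hypothesis that $R[1/p]$ is regular brings almost purity into play, and then to descend the conclusion back along $R^{A_\infty}_{\perfd}\to\widehat{R^+}$ and finally to $R^+$. First I would record two reductions. Fix $z\in I^{epf}$ and $e\in\mathbb{N}$; we must produce $p^{1/p^e}z\in IR^+$. Since $I$ contains a power $p^N$ of $p$, one has $(I,p^n)R^+=IR^+$ for all $n\ge N$, so the definition of extended plus closure supplies a nonzero $c\in R$ with $c^{1/n}z\in IR^+$ for every $n$; exactly as in the proof of Lemma~\ref{lemma: epf and BCM closure}, $c$ may be taken to be the discriminant $g\in A$ of the Noether--Cohen normalization $A\to R$, so that only $p$-power roots of $c$ are involved, and (equivalently) $z\in IB\cap R$ for all sufficiently large perfectoid big Cohen--Macaulay $R^+$-algebras $B$. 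Second, because $p^N\in I$, reduction modulo $p^N$ identifies $R^+/IR^+$ with $\widehat{R^+}/I\widehat{R^+}$, both being equal to $R^+/(I,p^N)R^+$; hence $I\widehat{R^+}\cap R^+=IR^+$, and since $p^{1/p^e}z\in R^+$ it suffices to show $p^{1/p^e}z\in I\widehat{R^+}$.

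The heart of the argument uses $R[1/p]$ regular through the maps $A_\infty\otimes_A R\to R^{A_\infty}_{\perfd}\to\widehat{R^+}$. Regularity of $R[1/p]$ forces $A[1/p]\to R[1/p]$ to be finite flat and generically \'etale, so almost purity over the perfectoid Tate ring $A_\infty[1/p]$, combined with the Bhatt--Scholze theory of perfectoidization, should show that $R^{A_\infty}_{\perfd}$ is $(p^{1/p^\infty})$-almost faithfully flat over $A_\infty$ --- equivalently, a $(p^{1/p^\infty})$-almost Cohen--Macaulay $A_\infty$-algebra --- and that the maps in the displayed diagram become $(p^{1/p^\infty})$-almost isomorphisms after reduction modulo any power of $p$. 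I expect this to be the main obstacle: one must set up the almost mathematics over $A_\infty$ so that the single ideal $(p^{1/p^\infty})$ simultaneously controls the perfectoidization defect and absorbs the auxiliary roots of the discriminant.

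Granting this, I would descend as follows. Writing $B:=R^{A_\infty}_{\perfd}$, transport the relations $g^{1/p^m}z\in IR^+\subseteq I\widehat{R^+}$ to $B$ using that $B/IB\to\widehat{R^+}/I\widehat{R^+}$ is injective up to $(p^{1/p^\infty})$-almost elements (both modules being annihilated by a power of $p$): this yields $p^{1/p^k}g^{1/p^m}z\in IB$ for all $k$ and $m$, that is, the image of $z$ in $B/IB$ is $g^{1/p^\infty}$-almost zero up to $(p^{1/p^\infty})$. Now run an almost colon-capturing argument over $A_\infty$: since $B$ is $(p^{1/p^\infty})$-almost flat over the ``regular'' perfectoid $A_\infty$ (whose distinguished regular sequence is $p,x_2,\dots,x_d$), $g\in A_\infty$, and the support of $B/IB$ lies in $V(p)$, the regularity of $R[1/p]$ --- equivalently, of $A_\infty\otimes_A R$ away from $p$ --- is precisely what makes the factor $g^{1/p^\infty}$ redundant modulo $(p^{1/p^\infty})$, so the image of $z$ is in fact $(p^{1/p^\infty})$-almost zero in $B/IB$, i.e.\ $p^{1/p^e}z\in IB$ for every $e$. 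Mapping to $\widehat{R^+}$ gives $p^{1/p^e}z\in I\widehat{R^+}$, which by the first paragraph is the assertion of the lemma. Besides the almost-purity input, the delicate point is making precise the sense in which regularity of $R[1/p]$ renders the discriminant redundant against $p^{1/p^\infty}$ in this last descent; the rest is formal bookkeeping with completions and almost mathematics.
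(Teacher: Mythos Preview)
Your reductions in the first paragraph are fine, and the overall architecture --- pass to a perfectoidization, use almost purity, descend back --- is the right shape. The gap is in the middle: you work with a \emph{single} Noether--Cohen normalization $A\to R$ with discriminant $g\in A$, and then assert that regularity of $R[1/p]$ lets you trade $g^{1/p^\infty}$-almost vanishing for $p^{1/p^\infty}$-almost vanishing. But ``$R[1/p]$ regular'' does \emph{not} imply that $A[1/p]\to R[1/p]$ is \'etale (e.g.\ $R=A[x_2^{1/2}]$), so almost purity over $A_\infty[1/p]$ only yields that $R^{A_\infty}_{\perfd}$ is $(g^{1/p^\infty})$-almost flat over $A_\infty$, not $(p^{1/p^\infty})$-almost flat. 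Correspondingly, for a single normalization there is no reason for $p$ to lie in $\sqrt{(g)}$, and your ``almost colon-capturing'' step --- where the factor $g^{1/p^\infty}$ is supposed to become redundant modulo $(p^{1/p^\infty})$ --- has no content: the regularity hypothesis does not, by itself, create any relation between $g$ and $p$.

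The paper's proof supplies exactly the missing mechanism. Using Heitmann's \'etale locus theorem, one produces \emph{finitely many} Noether--Cohen normalizations $A_i\to R$ with elements $g_i\in A_i$ such that each $A_i[1/g_i]\to R[1/g_i]$ is finite \'etale and $p\in\sqrt{(g_1,\dots,g_n)}$; this is precisely where ``$R[1/p]$ regular'' enters. For each $i$ one gets $(g_i)_{\perfd}\, z\subseteq IR^{(A_i)_\infty}_{\perfd}$ from \cite[Lemma~5.1.6]{CaiLeeMaSchwedeTuckerPerfdSignature}, and after pushing everything to $\widehat{R^+}$ the additivity $(g_1,\dots,g_n)_{\perfd}=\sum_i(g_i)_{\perfd}$ from \cite[Proposition~8.13]{BhattScholzepPrismaticCohomology}, together with the radicality of perfectoidization ideals, gives $(p^{1/p^\infty})\subseteq(g_1,\dots,g_n)_{\perfd}$ and hence $(p^{1/p^\infty})z\subseteq I\widehat{R^+}$. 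So the key idea you are missing is the use of several normalizations whose \'etale loci jointly cover $\Spec R\setminus V(p)$.
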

\begin{proof}
Let $J$ be the ideal generated by all elements $g\in R$ such that there exists some Noether-Cohen normalization $A\hookrightarrow R$ with $g\in A$ so that $A[1/g]\to R[1/g]$ is finite \'etale. We first show that $p\in \sqrt{J}$. If not, then we can find a prime $Q\in\Spec(R)$ containing $J$ such that $p\notin Q$. Since $R[1/p]$ is regular, we know that $R_Q$ is regular. But then by \cite[Theorem 0.1]{HeitmannEtaleLocus}, there exists a Noether-Cohen normalization $A\to R$ and $g\in A-Q$ such that $A[1/g]\to R[1/g]$ is finite \'etale contradicting our choice of $g$. It follows that there exist Noether-Cohen normalizations $A_i\to R$, $1\leq i\leq n$, such that $A_i[1/g_i]\to R[1/g_i]$ is finite \'etale and $p\in \sqrt{(g_1,\dots,g_n)}$. 

%
Now suppose $z\in I^{epf}$.  Then by Lemma~\ref{lemma: epf and BCM closure}, we have that $z\in IB\cap R$ for all sufficiently large perfectoid big Cohen-Macaulay $R^+$-algebras $B$ and thus by \cite[Lemma 5.1.6, $M_1=M_4$]{CaiLeeMaSchwedeTuckerPerfdSignature}, we know that
$(g_i)_{\perfd}z\in IR^{(A_i)_\infty}_{\perfd}$
for every $i$ (for a perfectoid ring $S$ and a finitely generated ideal $J\subseteq S$, $J_{\perfd}$ is the kernel of the natural map $S\to (S/J)_{\perfd}$, see \cite[Section 10]{BhattScholzepPrismaticCohomology}). Since $R^{(A_i)_\infty}_{\perfd}$ maps to $\widehat{R^+}$ for every $i$, it follows that 
$$\left((g_1)_{\perfd} + \cdots + (g_n)_{\perfd}\right)z = (g_1,\dots, g_n)_{\perfd}z \in I\widehat{R^+}$$
where the equality follows from \cite[Proposition 8.13]{BhattScholzepPrismaticCohomology} (a priori it guarantees the two ideals $(g_1,\dots, g_n)_{\perfd}$ and $(g_1)_{\perfd} + \cdots + (g_n)_{\perfd}$ agree up to $p$-adic closure, but both ideals are $p$-adically closed, see \cite[Proposition 2.7]{FayolleCentersPerfdPurity}). But since $p\in \sqrt{(g_1,\dots,g_n)}$ and the ideal $(g_1,\dots,g_n)_{\perfd}$ is radical, we have that $(p^{1/p^\infty})\subseteq (g_1,\dots,g_n)_{\perfd}$ and thus 
$$(p^{1/p^\infty})z\in I\widehat{R^+}\cap R^+=IR^+$$
since $I$ contains a power of $p$. This completes the proof of the lemma.
\end{proof}

Now we are ready to state and prove the key lemma that will be used in the next section. 

\begin{lemma}
\label{lemma: epf is p almost in ideal}
Suppose $x,y$ is a regular sequence in $R^+$ and that $(x,y)R^+$ contains a power of $p$. Then 
$$(p^{1/p^\infty})((x,y)R^+)^{epf}\subseteq (x,y)R^+.$$
\end{lemma}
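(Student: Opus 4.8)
The plan is to descend $z$ and its witness to a module-finite extension, move the regular sequence $x,y$ into $\widehat{R^+}$ using Bhatt's theorem, exploit that $\widehat{R^+}/(x,y)\widehat{R^+}$ is annihilated by a power of $p$, and then run the extended-plus-closure argument of Lemma~\ref{lemma: p is test element} over a \emph{regular} Noether--Cohen normalization of $R$.

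\textbf{Reductions.} Let $z\in ((x,y)R^+)^{epf}$, so $c^{1/n}z\in (x,y,p^n)R^+$ for all $n$ for some nonzero $c$; it suffices to show $p^{1/p^e}z\in (x,y)R^+$ for every $e$. All of $x,y,z,c$, a root $c^{1/p^e}$, and a relation $p^N=ax+by$ lie in a module-finite complete local domain $R'\supseteq R$, and $\dim R'/(x,y)R'=\dim R'-2$ (a height-one prime $P$ of $R'$ containing $(x,y)R'$ would lie under a prime $\mathfrak P$ of $R^+$ with $R^+_{\mathfrak P}$ integral over the one-dimensional ring $R'_{P}$, and then $x,y$ could not remain a regular sequence after localizing at $\mathfrak P$). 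Since $R^+=(R')^+$ and neither the closure nor the target changes, replace $R$ by $R'$; so from now on $x,y\in R$, $p^N\in (x,y)R$, and $x,y$ is part of a system of parameters of $R$. Fix a Noether--Cohen normalization $A=C_k[[x_2,\dots,x_d]]\hookrightarrow R$: then $A$ is regular, $R^+=A^+$, $\widehat{R^+}=\widehat{A^+}$, and $A[1/p]$ is regular; and after replacing $c$ by its norm $N_{\Frac(A[c])/\Frac(A)}(c)$ (which is nonzero, lies in $A$, and is divisible by $c$ in $A^+$, hence is still a witness) we may assume $c\in A\setminus\{0\}$.

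\textbf{Passage to $\widehat{R^+}$.} By Theorem~\ref{theorem: BB}, $\widehat{R^+}$ is a balanced big Cohen--Macaulay $R$-algebra, so $x,y$, being part of a system of parameters of $R$, is a regular sequence on $\widehat{R^+}$, and $\widehat{R^+}/(x,y)\widehat{R^+}$ is annihilated by $p^N$. Since $\widehat{R^+}/p^n\widehat{R^+}=R^+/p^nR^+$ for all $n$ and $p^N\in (x,y)R^+$, one has $(x,y)\widehat{R^+}\cap R^+=(x,y)R^+$: if $w=x\alpha+y\beta\in R^+$ with $\alpha,\beta\in\widehat{R^+}$, pick $a,b\in R^+$ with $\alpha\equiv a,\ \beta\equiv b\pmod{p^N\widehat{R^+}}$, so $p^N\gamma:=w-(xa+yb)\in R^+$ with $\gamma\in\widehat{R^+}$; cancelling $p^N$ in the domain $\widehat{R^+}$ (\cite{HeitmannAICSurprisinglyDomain}) and using $\widehat{R^+}/p^N=R^+/p^N$ gives $\gamma\in R^+$, hence $w=(xa+yb)+p^N\gamma\in (x,y)R^+$ because $p^N\in (x,y)R^+$. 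As $p^{1/p^e}z\in R^+$, it now suffices to prove $p^{1/p^e}z\in (x,y)\widehat{R^+}$, i.e. that $\overline z\in M:=\widehat{R^+}/(x,y)\widehat{R^+}$ is killed by $p^{1/p^e}$ for every $e$.

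\textbf{Upgrading the witness.} From $c^{1/n}z\in (x,y,p^n)\widehat{R^+}$ and $p^NM=0$ we get $c^{1/n}z\in (x,y)\widehat{R^+}$ for all $n\ge N$; taking $n=p^e$ and using compatibility of the chosen roots of $c$, $\overline z\in M$ is killed by $c^{1/p^e}$ for every $e$. The essential remaining point is that annihilation by $c^{1/p^\infty}$ can be upgraded to annihilation by $p^{1/p^\infty}$. For this, descend once more to a module-finite complete local domain $R_\beta$ with $A\subseteq R_\beta$ containing $x,y,z,c,c^{1/N}$ and with $c^{1/N}z\in (x,y)R_\beta$; then $M_\beta:=R_\beta/(x,y)R_\beta$ is a finitely generated $A$-module killed by $p^N$, and $\overline z_\beta\in M_\beta$ lies in the extended-plus-closure of $0$ computed over the \emph{regular} ring $A$. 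Running the argument of Lemma~\ref{lemma: p is test element} with $A$ in place of $R$ — where $A[1/p]$ is regular and the step ``$p\in\sqrt J$'' is automatic since $A$ is its own Noether--Cohen normalization — applied to the finitely generated $A$-module $M_\beta$ via \cite[Lemma~5.1.6]{CaiLeeMaSchwedeTuckerPerfdSignature} and \cite[Proposition~8.13]{BhattScholzepPrismaticCohomology}, exactly as in the proofs of Lemmas~\ref{lemma: epf and BCM closure} and~\ref{lemma: p is test element}, yields $p^{1/p^e}z\in (x,y)\widehat{R^+}$. Together with the previous step this proves the lemma.

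\textbf{Main obstacle.} The hard part is this last step — transplanting the extended-plus-closure/perfectoidization argument of Lemma~\ref{lemma: p is test element} to the situation where $z$ lies in $R^+$ rather than in the regular ring $A$. One must run it at the level of the finitely generated $A$-module approximations $R_\beta/(x,y)R_\beta$ of $R^+/(x,y)R^+$ and verify that passage to discriminant elements and the perfectoidization functor are compatible with the filtered colimit. It is precisely the hypothesis that $x,y$ be a regular sequence — which forces $R^+/(x,y)R^+$ to be annihilated by a power of $p$ and makes $\widehat{R^+}$ big Cohen--Macaulay with respect to $x,y$ — that makes this work and removes the need for $R[1/p]$ to be regular required in Lemma~\ref{lemma: p is test element}.
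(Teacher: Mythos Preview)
There is a genuine gap in your final ``upgrading'' step. You assert that $\bar z_\beta\in M_\beta=R_\beta/(x,y)R_\beta$ lies in the extended-plus-closure of $0$ \emph{computed over the regular ring $A$}, and then invoke \cite[Lemma 5.1.6]{CaiLeeMaSchwedeTuckerPerfdSignature} over $A$. But those closure conditions are phrased through $M_\beta\otimes_A A^+$ (or $M_\beta\otimes_A B$), and since $A$ is regular every big Cohen--Macaulay $A$-algebra is faithfully flat over $A$; hence $M_\beta\to M_\beta\otimes_A B$ is injective and the closure of $0$ in $M_\beta$ over $A$ is simply $0$. So your claim, if correct, would force $z\in(x,y)R_\beta$, which is far stronger than available. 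What you actually have is that $c^{1/n}\bar z$ vanishes in $R^+/(x,y)R^+=M_\beta\otimes_{R_\beta}R_\beta^+$, a tensor over $R_\beta$, not over $A$; the surjection $M_\beta\otimes_A A^+\twoheadrightarrow M_\beta\otimes_{R_\beta}R^+$ has a nontrivial kernel, and vanishing downstairs does not lift (concretely, $(c^{1/n}\bar z_\beta)\otimes 1$ and $\bar z_\beta\otimes c^{1/n}$ differ in $M_\beta\otimes_A A^+$ since $c^{1/n}\notin A$). If instead you run \cite[Lemma 5.1.6]{CaiLeeMaSchwedeTuckerPerfdSignature} over $R_\beta$, where the hypothesis \emph{is} available, then trading the discriminant of $A\to R_\beta$ for $p^{1/p^\infty}$ via the \'etale-locus argument of Lemma~\ref{lemma: p is test element} requires $R_\beta[1/p]$ to be regular---and you have no control over this once $\dim R\ge 3$. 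Your remark that ``$p\in\sqrt J$ is automatic since $A$ is its own Noether--Cohen normalization'' conflates these two incompatible setups.

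The paper's proof avoids this obstruction by a different architecture: for $\dim R=2$ one can take $S\ni x,y,z$ complete normal of dimension $2$, so $S[1/p]$ is automatically regular and Lemma~\ref{lemma: p is test element} applies directly to $(x,y)S$; for $\dim R\ge 3$ it argues by induction on dimension that the obstruction module $\bigl((p^{1/p^\infty})((x,y)R^+)^{epf}+(x,y)R^+\bigr)\big/(x,y)R^+$ is supported only at $\m_{R^+}$, whence it embeds into $H^0_\m\bigl(\widehat{R^+}/(x,y)\widehat{R^+}\bigr)=0$ by Theorem~\ref{theorem: BB}.
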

\begin{proof}
%
We first assume $\dim(R)=2$ and let $z\in ((x,y)R^+)^{epf}$.  We may assume that $x,y,z\in S$ where $S$ is a complete normal local domain module-finite over $R$. Since $\dim(S)=2$ and $S$ is normal, $S[1/p]$ is regular and thus by Lemma~\ref{lemma: p is test element}, $(p^{1/p^\infty})z\in (x,y)R^+$ as wanted. 

We now suppose $R$ is a counter-example to the lemma of smallest dimension, and by the previous paragraph we have $\dim(R)\geq 3$. 
\begin{claim}
\label{claim: support only at m}
The $R^+$-module
\[\frac{(p^{1/p^\infty})((x,y)R^+)^{epf}+(x,y)R^+}{(x,y)R^+}\]
is supported only possibly at $\m_{R^+}$.
\end{claim} 
\begin{proof}[Proof of Claim]
Suppose $Q\in\Spec(R^+)$ is in the support and $Q\neq \m_{R^+}$ (note that $Q$ necessarily contains $p$). Then 
\[\frac{(p^{1/p^\infty})((x,y)R^+)^{epf}R^+_Q+(x,y)R^+_Q}{(x,y)R^+_Q}\neq 0.\]

Thus there exists $m$ and $z\in p^{1/p^m}((x,y)R^+)^{epf}$ such that $z\notin (x,y)R^+_Q$. We may assume that $x,y,z\in S$ where $S$ is a complete normal local domain module-finite over $R$. Let $P$ be the contraction of $Q$ to $S$ and let $T$ be the $(PS_P)$-adic completion of $S_P$. Since $S_P$ is excellent and normal, $T$ is a Noetherian complete 
%
%
local domain of mixed characteristic $(0,p)$ such that $\dim(T)<\dim(R)$. Abusing {notation}, we still use $x,y,z$ to denote their images in various localizations and completions. Since $z\notin (x,y)R^+_Q$, we have $z\notin (x,y)(R^+)_P=(x,y)(S_P)^+$. This implies that $z\notin (x,y)T^+$ by \cite[Proposition 3.10]{DattaTuckerPermanencePropertiesSplinters} (which originates from \cite[Proposition 5.10]{Smithtightclosureofparameterideals}). But clearly, $z\in p^{1/p^m}((x,y)T^+)^{epf}\subseteq (p^{1/p^\infty})((x,y)T^+)^{epf}$ and thus $T$ is a counter-example to the lemma with $\dim(T)<\dim(R)$, a contradiction. 
\end{proof}

By Claim~\ref{claim: support only at m}, we have
\begin{align*}
  \frac{(p^{1/p^\infty})((x,y)R^+)^{epf}+(x,y)R^+}{(x,y)R^+}  
    & \cong  H_\m^0\left(\frac{(p^{1/p^\infty})((x,y)R^+)^{epf}+(x,y)R^+}{(x,y)R^+}\right) \\
    & \hookrightarrow H_\m^0\left(\frac{R^+}{(x,y)R^+}\right) \cong H_\m^0\left(\frac{\widehat{R^+}}{(x,y)\widehat{R^+}}\right)=0 
\end{align*}
where the isomorphism on the second line follows from the fact that ${(x,y)R^+}$ contains a power of $p$ and the last equality on the second line follows from Theorem~\ref{theorem: BB} (and that $\dim(R)\geq 3$). This completes the proof of the lemma.
\end{proof}

\section{The main results}

\noindent\textbf{Notation}: Throughout this section, $(R,\m)$ will denote a Noetherian complete local domain of mixed characteristic $(0,p)$. We fix an absolute integral closure $R^+$ of $R$ and let $\widehat{R^+}$ be its $p$-adic completion. We will write an element $z\in\widehat{R^+}$ as a limit 
\[z=\lim_i z_i, \text{ where } z_i\in R^+ \text{ and } z_{i} \equiv  z_{i-1} \text{ mod } p^i  \text{ for all $i$}.\]
Moreover, for every height one prime $Q\in\Spec(R^+)$ that contains $p$, $(R^+)_Q$ is a valuation ring of rank one and thus defines a $\mathbb{Q}$-valuation $v_Q$ on $R^+$. We normalize $v_Q$ so that $v_Q(p)=1$, and we extend it to an $\mathbb{R}$-valuation on $\widehat{R^+}$ by assigning the value of $z\in\widehat{R^+}$ to be $\lim_iv_Q(z_i)$ (it is easy to check that this is well-defined). We will abuse {notation} a bit and still write $v_Q$ for the extended valuation on $\widehat{R^+}$. 

We caution the readers that, even though $\widehat{R^+}$ is an integral domain, the valuation $v_Q$ has nontrivial support on $\widehat{R^+}$ as long as $\dim(R)\geq 2$, see Proposition~\ref{proposition: construction of bad g}.

\vspace{0.7em}

We are ready to state and prove the following extension of Theorem B from the introduction. 

\begin{theorem}
\label{theorem: completeIntegrallyClosed}
With notation as above. For a nonzero $g\in \widehat{R^+}$, the following are equivalent:
\begin{enumerate}
  \item[(1)] $\widehat{R^+}/g$ is $p$-adically separated.
  \item[(1')] $\widehat{R^+}/g$ is $p$-adically complete.
  \item[(2)] $v_Q(g)<\infty$ for all height one primes $Q\in\Spec(R^+)$ that contain $p$.
  \item[(2')] $\exists N>0$ such that $v_Q(g)<N$ for all height one primes $Q\in\Spec(R^+)$ that contain $p$.
  \item[(3)] $\widehat{R^+}/g$ has bounded $p$-power torsion.
  \item[(4)] $\widehat{R^+}$ is completely integrally closed in $\widehat{R^+}[1/g]$.
\end{enumerate}
\end{theorem}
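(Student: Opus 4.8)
I would prove the cycle of implications $(1)\Leftrightarrow(1')$, $(1)\Leftrightarrow(3)$, $(3)\Leftrightarrow(2')\Leftrightarrow(2)$, and $(1)\Rightarrow(4)\Rightarrow(3)$ (or $(2)$). Several of these are soft; the substance lies in $(1)\Rightarrow(4)$, which should use the extended plus closure machinery from Section 3, and in $(2)\Rightarrow(3)$.

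First, the easy equivalences among $(1)$, $(1')$, $(3)$. The module $\widehat{R^+}/g$ is the cokernel of $\widehat{R^+}\xrightarrow{g}\widehat{R^+}$, and since $\widehat{R^+}$ is a $p$-torsion free $p$-adically complete ring, $\widehat{R^+}/g$ is always derived $p$-complete. By \cite[\href{https://stacks.math.columbia.edu/tag/0BKF}{Tag 0BKF}]{stacks-project} derived $p$-completion agrees with classical $p$-adic completion precisely when the module has bounded $p$-power torsion; and for a derived complete module, $p$-adic separatedness is itself equivalent to bounded $p$-power torrsion (the $p$-torsion in $\widehat{R^+}/g$ is exactly $\bigl((g,p^{n})\widehat{R^+} : g\bigr)/g\widehat{R^+}$-type data, whose stabilization is the separatedness condition and whose boundedness makes the classical and derived completions coincide). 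So $(1)\Leftrightarrow(1')\Leftrightarrow(3)$ all fall out of standard completion facts.

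Next, the valuative reformulation $(2)\Leftrightarrow(2')\Leftrightarrow(3)$. Writing $z=\lim z_i$ with $z_i\in R^+$, an element $\bar z\in\widehat{R^+}/g$ is killed by $p^n$ iff $z\in (g,p^n)\widehat{R^+}$; intersecting with $R^+$ and chasing the valuations $v_Q$ one finds that the $p$-power torsion of $\widehat{R^+}/g$ is controlled by how large $v_Q(g)$ can be as $Q$ ranges over height-one primes containing $p$. The content here is that $g$ almost divides $p^{N}\cdot(\text{anything with }v_Q\geq v_Q(g)\ \forall Q)$ once $N>v_Q(g)$ uniformly — this is where Lemma~\ref{lemma: epf is p almost in ideal} (applied with a regular sequence extracted from $g$ and a power of $p$) enters, converting "divisible up to $p^{1/p^\infty}$" statements into genuine divisibility after multiplying by $p^{1/p^\infty}$. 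Conversely if some $v_Q(g)=\infty$, then $g\in(p^n)\widehat{R^+}$ "to order $Q$" for all $n$ while $g\notin p\widehat{R^+}$ globally (by Lemma~\ref{lemma: v(g) is not infinity} there is a valuation not killing $g$), producing unbounded $p$-torsion in $\widehat{R^+}/g$. I expect $(2)\Rightarrow(2')$, i.e.\ upgrading finiteness of each $v_Q(g)$ to a uniform bound, to need a compactness/Noetherianity argument: reduce to $g\in R^+$, or better to a module-finite normal $S$ with $g\in S$, where the relevant primes $Q$ lie over finitely many height-one primes of $S$ and the valuations $v_Q(g)$ take only finitely many values.

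The main obstacle is $(1)\Rightarrow(4)$: assuming $\widehat{R^+}/g$ is $p$-adically separated, show every $t\in\widehat{R^+}[1/g]$ almost integral over $\widehat{R^+}$ already lies in $\widehat{R^+}$. Write $t=z/g^k$; almost integrality gives a nonzero $c$ with $c z^n/g^{nk}\in\widehat{R^+}$, i.e.\ $c z^n\in g^{nk}\widehat{R^+}$ for all $n$. The goal is $z\in g^k\widehat{R^+}$. I would first reduce, by taking $n$-th roots and using that $\widehat{R^+}$ contains all $p$-power roots, to a statement about $g$-divisibility "up to arbitrarily small $c$", i.e.\ $c^{1/n}z\in g^k\widehat{R^+}$ for all $n$ — an extended-plus-closure-type membership. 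Then combine with the hypothesis: $p$-adic separatedness of $\widehat{R^+}/g$ (together with the perfectoid/big Cohen–Macaulay structure from Theorem~\ref{theorem: BB} making suitable entries of $g$ a regular sequence after adjoining a power of $p$) should force $z\in\bigcap_n (g,p^n)\widehat{R^+}=g\widehat{R^+}$, and iterating handles $g^k$. The delicate point is that extended plus closure and almost mathematics naturally live in $R^+$, not $\widehat{R^+}$, so one must pass back and forth — truncating $z=\lim z_i$, running the closure argument on $(g_i,p^N)R^+$ for suitable approximations, and using Lemma~\ref{lemma: epf is p almost in ideal} to absorb the $p^{1/p^\infty}$ factor — and keep the estimates uniform in $i$. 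For the converse $(4)\Rightarrow(1)$ I would argue contrapositively: if $\widehat{R^+}/g$ is not $p$-adically separated, pick $z\in\bigcap_n(g,p^n)\widehat{R^+}$ with $z\notin g\widehat{R^+}$ and show $z/g\in\widehat{R^+}[1/g]$ is almost integral over $\widehat{R^+}$ (the element $p^{1/p^\infty}$, or a suitable power of $p$, serves as the almost-integrality multiplier, again via Lemma~\ref{lemma: epf is p almost in ideal}), contradicting $(4)$.
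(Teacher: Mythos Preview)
Your cycle has a genuine break. The claim that ``for a derived complete module, $p$-adic separatedness is itself equivalent to bounded $p$-power torsion'' is false: $\prod_{n\geq 1}\mathbb{Z}/p^n$ is derived $p$-complete, $p$-adically separated and complete, but has unbounded $p$-torsion. So $(1)\Rightarrow(3)$ is \emph{not} soft. Your proposed $(3)\Rightarrow(2)$ is equally problematic: from $v_Q(g)=\infty$ you want to exhibit unbounded $p$-torsion in $\widehat{R^+}/g$, but that requires producing elements \emph{not} in $g\widehat{R^+}$, and there is no mechanism in your sketch for certifying non-membership. Once you strip out these two broken links, what survives is $(2)\Leftrightarrow(2')\Rightarrow(3)\Rightarrow(1)\Leftrightarrow(1')$ and $(1)\Leftrightarrow(4)$, with nothing going back from the separatedness side to the valuation side.

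The paper closes this gap with Lemma~\ref{lemma: HeitmannNote}, proving $(1)\Rightarrow(2)$ directly: assuming $v_Q(g)=\infty$ for some $Q$, one constructs by a recursive ramification argument an explicit $f$ in the $p$-adic closure of $g\widehat{R^+}$, and then shows $f\notin g\widehat{R^+}$ by invoking the result of \cite{HeitmannAICSurprisinglyDomain} that the $\m$-adic completion of $R^+$ is a domain. This last ingredient is essential and is entirely absent from your plan. You should also recalibrate where the heavy machinery sits: the paper's $(2)\Rightarrow(3)$ (Lemma~\ref{lemma: bounded p-torsion}) is a two-line valuation argument using only $\overline{pR^+}=pR^+$, with no need for Lemma~\ref{lemma: epf is p almost in ideal}; that lemma (together with the Brian\c{c}on--Skoda theorem and a reduction via Lemma~\ref{lemma: regular sequence}) is used exactly where you expect, in $(1)\Rightarrow(4)$. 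Finally, for $(4)\Rightarrow(1)$ the paper's Lemma~\ref{lemma: p adic closure in total integral closure} is more elementary than what you propose: if $f\in\bigcap_n(g,p^n)$ one builds $h$ with $g^{k-1}h=f^k$ directly, so $g$ itself (not $p^{1/p^\infty}$) serves as the almost-integrality multiplier, and no closure operations are needed.
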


\begin{remark}
\label{remark: derived completion}
We first point out that $(3)\Rightarrow(1')\Leftrightarrow(1)$ follows from general facts on derived completion. Since $\widehat{R^+}$ is $p$-adically complete, we know that $\widehat{R^+}/g$ is derived $p$-complete by \cite[\href{https://stacks.math.columbia.edu/tag/091U}{Tag 091U}]{stacks-project} and thus it is $p$-adically complete if and only if it is $p$-adically separated by \cite[\href{https://stacks.math.columbia.edu/tag/091T}{Tag 091T}]{stacks-project}. Moreover, in the bounded $p$-power torsion case (e.g., $p$-torsion free case), derived $p$-completion agrees with usual $p$-adic completion as mentioned before, thus $\widehat{R^+}/g$ is $p$-adically complete. 
\end{remark}

The proof of Theorem~\ref{theorem: completeIntegrallyClosed} consists of many steps and we need several lemmas. We start by showing that the conditions $(2)$ and $(2')$ above are equivalent. 

\begin{lemma}
\label{lemma: v uniform bounded}
With notation as in Theorem~\ref{theorem: completeIntegrallyClosed}, we have $(2)\Leftrightarrow(2')$.
\end{lemma}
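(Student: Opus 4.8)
Since $(2')$ trivially implies $(2)$, I would concentrate on $(2)\Rightarrow(2')$. Using the presentation $g=\lim_i g_i$ with $g_i\in R^+$ and $g_i\equiv g_{i-1}\bmod p^i$, the plan is to attach to each height one prime $Q\in\Spec(R^+)$ containing $p$ the index
\[ i(Q):=\min\{\, i\ :\ v_Q(g_i)<i\,\}. \]
A short ultrametric computation based on $v_Q(g-g_i)\ge i$ shows that $i(Q)$ is finite precisely when $v_Q(g)<\infty$, and that in that case $v_Q(g)=v_Q(g_{i(Q)})<i(Q)$. So $(2)$ says exactly that $i(Q)<\infty$ for every such $Q$, and once $N:=\sup_Q i(Q)$ is shown to be finite we get $v_Q(g)<i(Q)\le N$ for all $Q$, which is $(2')$. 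Thus the whole problem is to bound $i(Q)$. After the harmless reduction to the case $g\notin p\widehat{R^+}$ (write $g=p^kg_0'$ with $g_0'\notin p\widehat{R^+}$, using that $\widehat{R^+}$ is $p$-adically separated; then $v_Q(g)=k+v_Q(g_0')$, so $(2)$ and $(2')$ are unaffected), every $g_i$ is nonzero, and I would fix an ascending chain of $R$-subalgebras $S_1\subseteq S_2\subseteq\cdots$ of $R^+$, each a normal domain finite over $R$, with $g_1,\dots,g_k\in S_k$, and set $S_\infty=\bigcup_k S_k$, a normal domain over which $R^+$ is integral.

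The core is a compactness-type argument showing $i(Q)$ is bounded. Assume not, and choose height one primes $Q_n\ni p$ of $R^+$ with $i(Q_n)\ge n$. For $n>j$ we have $v_{Q_n}(g_j)\ge j$, so $g_j\in Q_n$ and $Q_n\cap S_j$ is a height one prime of $S_j$ containing $p$ and minimal over $(g_j)$ — of which $S_j$ has only finitely many. Iterating a pigeonhole choice and diagonalizing, I would produce a nested sequence of infinite index sets $N_1\supseteq N_2\supseteq\cdots$ and a compatible chain of height one primes $P^{(1)}\subseteq P^{(2)}\subseteq\cdots$ with $P^{(j)}\subseteq S_j$, $P^{(j)}\cap S_{j-1}=P^{(j-1)}$, and $Q_n\cap S_j=P^{(j)}$ for all $n\in N_j$. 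Then $\mathfrak P_\infty:=\bigcup_j P^{(j)}$ is a prime of $S_\infty$ containing $p$; any strictly increasing chain of primes below it would descend to some $S_j$ and contradict $\height P^{(j)}=1$, so $\height\mathfrak P_\infty=1$. Extending the rank one valuation of $(S_\infty)_{\mathfrak P_\infty}$ (normalized at $p$) to a valuation $w$ on $\overline K$ and letting $Q_*\subseteq R^+$ be its center, one gets $\height Q_*=\height\mathfrak P_\infty=1$ and $p\in Q_*$, since $S_\infty$ is normal and $R^+$ is integral over it; so $Q_*$ is one of the primes relevant to $(2)$. Because $g_j\in S_j$ and the center of $w$ on $S_j$ is $P^{(j)}$, for each $j$ and any $n\in N_j$ we have $v_{Q_*}(g_j)=v_{P^{(j)}}(g_j)=v_{Q_n}(g_j)\ge j$, whence $v_{Q_*}(g)=\lim_j v_{Q_*}(g_j)=\infty$, contradicting $(2)$.

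The verifications still to record are routine: that $v_Q$ restricted to each $\Frac(S_j)$ is exactly the rank one valuation attached to $Q\cap S_j$ (a valuation ring of $\Frac(S_j)$ containing the DVR $(S_j)_{Q\cap S_j}$ and not the whole field must equal that DVR); that a directed union of normal domains is normal and a directed union of valuation rings is a valuation ring; that for an integral extension $A\subseteq B$ with $A$ normal and $B$ a domain one has $\height\mathfrak q=\height(\mathfrak q\cap A)$, by going-down together with incomparability; and that a rank one valuation extends to any algebraic field extension with rank preserved. I expect the only genuinely non-formal point — the main obstacle — to be this compactness step: arranging the pigeonhole and diagonal choices so that the limiting prime $\mathfrak P_\infty$, and then its extension $Q_*$ to $R^+$, are still height one and still contain $p$. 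Granting that, the rest is the ultrametric bookkeeping above and standard commutative algebra.
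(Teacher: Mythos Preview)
Your proof is correct and follows essentially the same strategy as the paper's: both argue by contradiction and run a K\"onig's-lemma compactness argument over an increasing chain of finite normal $R$-subalgebras of $R^+$ (the paper's $R_n$, your $S_j$) to produce a compatible chain of height-one primes, then lift to a height-one prime $Q$ of $R^+$ with $v_Q(g)=\infty$. The only cosmetic differences are that the paper phrases compactness via stabilization of nested finite sets $\Theta_n^{n'}\subseteq\Lambda_n$ rather than your pigeonhole-on-sequences diagonalization, and lifts the limit prime to $R^+$ directly via lying-over rather than via extending the valuation.
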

\begin{proof}
It suffices to show $(2)\Rightarrow(2')$ as the other direction is obvious. 
We write $g=\lim_ng_n$ and let $R_n$ be the normalization of $R[g_0, g_1,\dots,g_n]$. Note that we have 
$$R_0\subseteq R_1\subseteq R_2\subseteq \cdots \subseteq R^+.$$ 

Suppose $v_Q(g)$ is not uniformly bounded. Then for every $n$, there exists $Q$ (which might depend on $n$) such that $v_Q(g_n)\geq n+1$, since otherwise $v_Q(g)=v_Q(g_n)<n+1$ for all $Q$ (as $g\equiv g_n$ mod $p^{n+1}$). We next note that if $v_Q(g_n)\geq n+1$, then $v_Q(g_m)\geq m+1$ for all $m\leq n$: since otherwise $v_Q(g_n)=v_Q(g_m)<m+1\leq n+1$ (as $g_n\equiv g_m$ mod $p^{m+1}$).

Now let $\Lambda_n$ be the set of height one primes $P$ in $R_n$ such that $P$ is the contraction of a height one prime $Q$ of $R^+$ containing $p$ with $v_Q(g_n)\geq n+1$ (note that since $g_n\in R_n$, this implies that $v_{Q'}(g_n)\geq n+1$ {for every height one prime $Q'$} of $R^+$ that contracts to $P$). By the discussion above, we know that $\Lambda_n\neq\emptyset$ for all $n$ and that the contraction of each $P\in \Lambda_n$ to $R_m$ is contained in $\Lambda_m$ for all $m\leq n$. For each $n'\geq n$, we further define
$$\Theta_n^{n'} := \{P\in \Lambda_n \,\ | \,\ \text{$P$ is the contraction of some $P'\in \Lambda_{n'}$} \}.$$
Again by the discussion before, $\{\Theta_n^{n'}\}_{n'}$ form a descending chain of non-empty subsets of $\Lambda_n$. Since $\Lambda_n$ is a finite set (each $R_n$ is Noetherian and each prime ideal in $\Lambda_n$ is a minimal prime of $p$), $\{\Theta_n^{n'}\}_{n'}$ stabilizes to a nonempty subset $\Theta_n$ of $\Lambda_n$. Moreover, it is easy to see that for every $n$ and every $P\in \Theta_n$, there exists $P'\in \Theta_{n+1}$ such that $P'$ contracts to $P$. Therefore we can start with a prime $P_0\in \Theta_0$ in $R_0$ and compatibly lift $P_0$ to a chain 
$$P_0\subseteq P_1 \subseteq P_2 \subseteq \cdots, $$
where $P_i\in \Theta_i$ is a prime in $R_i$ that contracts to $P_{i-1}$ for each $i$. It follows that there exists a prime $Q\in\Spec(R^+)$ that contracts to $P_i$ for each $i$. However, since $P_i\in \Theta_i\subseteq \Lambda_i$, we know that $v_Q(g_i)\geq i+1$ for each $i$ and thus $v_Q(g)=\lim_iv_Q(g_i)=\infty$, a contradiction. 
\end{proof}

%
The next two lemmas will establish $(2)\Rightarrow(3)$ and $(4)\Rightarrow(1)$. 

\begin{lemma}
\label{lemma: bounded p-torsion}
With notation as in Theorem~\ref{theorem: completeIntegrallyClosed}, we have $(2)\Rightarrow(3)$.
\end{lemma}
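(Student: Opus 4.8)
The plan is to first replace $(2)$ by the uniform bound $(2')$ via Lemma~\ref{lemma: v uniform bounded}: fix an integer $N_0$ with $v_Q(g)<N_0$ for every height one prime $Q\ni p$ of $R^+$. Having bounded $p$-power torsion for $\widehat{R^+}/g$ is the same as saying that the ascending chain of ideals $\big(g\widehat{R^+}:_{\widehat{R^+}}p^k\big)_{k\ge 1}$ stabilizes, so it suffices to show that for every $N\ge N_0$ one has $\big(g\widehat{R^+}:p^{N+1}\big)\subseteq\big(g\widehat{R^+}:p^{N}\big)$; that is, whenever $p^{N+1}z=gw$ with $z,w\in\widehat{R^+}$, we already have $p^{N}z\in g\widehat{R^+}$. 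Since each $v_Q$ extends to a valuation on $\widehat{R^+}$ with $v_Q(p)=1$ and $v_Q$ is nonnegative on $\widehat{R^+}$, we get $v_Q(w)=(N+1)+v_Q(z)-v_Q(g)>N+1-N_0\ge 1$ for every such $Q$ (if $v_Q(z)=\infty$ then $v_Q(w)=\infty$, which is equally fine). Thus the whole statement reduces to a divisibility criterion: \emph{if $w\in\widehat{R^+}$ satisfies $v_Q(w)\ge 1$ for every height one prime $Q\ni p$ of $R^+$, then $w\in p\widehat{R^+}$}. Granting this, write $w=pu$ with $u\in\widehat{R^+}$; then $p^{N+1}z=gpu$, and cancelling $p$ in the domain $\widehat{R^+}$ gives $p^{N}z=gu\in g\widehat{R^+}$.

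To prove the divisibility criterion I would first establish it inside $R^+$. Suppose $w\in R^+$ and choose a module-finite normal local domain $S$ with $R\subseteq S\subseteq R^+$ and $w\in S$. For each height one prime $P\ni p$ of $S$ there is a height one prime $Q\ni p$ of $R^+$ contracting to $P$ (any prime of $R^+$ minimal over $pR^+$ and contained in a prime lying over $P$), and since $(R^+)_Q$ is a rank one valuation ring dominating the DVR $S_P$, the restriction of $v_Q$ to $S$ equals $\ord_P(p)^{-1}\ord_P$; hence $v_Q(w)\ge 1$ forces $\ord_P(w)\ge\ord_P(p)$. As this holds for every height one prime of $S$ and $S$ is normal, $w\in pS\subseteq pR^+$. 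Now for general $w\in\widehat{R^+}$ write $w=\lim_i\bar w_i$ with $w_i\in R^+$, $w-\bar w_i\in p^i\widehat{R^+}$, and $w_{i+1}-w_i\in p^{i+1}R^+$. For $i\ge 1$ we have $v_Q(\bar w_i)\ge\min\{v_Q(w),v_Q(w-\bar w_i)\}\ge\min\{1,i\}=1$ for every $Q$, so by the case just treated $w_i=pu_i$ with $u_i\in R^+$. Since $\widehat{R^+}$ is $p$-torsion free, cancelling $p$ from $\bar w_{i+1}-\bar w_i=p(\bar u_{i+1}-\bar u_i)\in p^{i+1}\widehat{R^+}$ shows that the images $\bar u_i$ form a $p$-adically Cauchy sequence in $\widehat{R^+}$; its limit $u\in\widehat{R^+}$ satisfies $pu=\lim_i p\bar u_i=\lim_i\bar w_i=w$, so $w\in p\widehat{R^+}$.

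I expect the divisibility criterion to be the crux, and the two delicate points inside it are: (i) the structural input about $R^+$ — that every height one prime over $p$ of a module-finite normal extension $S$ is contracted from a height one prime over $p$ of $R^+$, at which $R^+$ localizes to a rank one valuation ring inducing the stated multiple of $\ord_P$ — which is what lets divisibility by $p$ be tested through the family $\{v_Q\}$; and (ii) the completion bookkeeping, namely running the lifting argument entirely with the images $\bar w_i,\bar u_i$ in $\widehat{R^+}$, so that one never needs $R^+$ itself to be $p$-adically separated, only that $\widehat{R^+}$ is $p$-torsion free and that $w=\lim_i\bar w_i$. All remaining steps are routine valuation arithmetic. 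Although only divisibility by $p$ is needed here, the same two-step argument yields the general criterion $w\in p^{j}\widehat{R^+}\iff v_Q(w)\ge j$ for all height one $Q\ni p$, which could be recorded for later use.
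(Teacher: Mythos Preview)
Your proof is correct and follows essentially the same strategy as the paper's: pass to the uniform bound $(2')$, show the cofactor has $v_Q\ge 1$ at every height one prime $Q\ni p$, and conclude divisibility by $p$ via integral closedness of $R^+$. The paper's argument is a bit shorter in two places—it reduces to $R^+$ by simply writing $z=z_0+pw$ with $z_0\in R^+$ (rather than your Cauchy sequence), and it cites \cite[Proposition~1.5.2]{SwansonHunekeIntegralClosure} for $\overline{pR^+}=pR^+$ (rather than reproving it via a module-finite normal $S$)—but the content is the same.
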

\begin{proof}
Assuming $(2)$ holds, by Lemma~\ref{lemma: v uniform bounded}, there exists $N>0$ such that $v_Q(g)<N$ for all $Q$. It is enough to show that any $p^{N+1}$-torsion in $\widehat{R^+}/g$ is $p^N$-torsion. Suppose $p^{N+1}h\in g\widehat{R^+}$, i.e., we have $p^{N+1}h=gz$ for some $z\in \widehat{R^+}$. Then $v_Q(z)>1$ for every height one prime $Q\in\Spec(R^+)$ containing $p$. Writing $z=z_0+pw$ with $z_0\in R^+$, we have $v_Q(z_0)\geq 1$ and thus $z_0\in\overline{pR^+}=pR^+$ (see \cite[Proposition 1.5.2]{SwansonHunekeIntegralClosure} for the equality). Therefore $z\in p\widehat{R^+}$ and thus $p^N h\in g\widehat{R^+}$.
\end{proof}

\begin{lemma}
\label{lemma: p adic closure in total integral closure}
Let $S$ be a domain and let $J\subseteq S$ be an ideal such that $S$ is $J$-adically complete. Suppose $f,g\in S$ so that $f$ is in the $J$-adic closure of $(g)$, i.e., $f\in \cap_n ((g) + J^n)$. Then $f/g\in S[1/g]$ is almost integral over $S$. 
In particular, with notation as in Theorem~\ref{theorem: completeIntegrallyClosed}, we have $(4)\Rightarrow(1)$.
\end{lemma}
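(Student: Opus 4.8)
The plan is to produce a single nonzero element $s\in S$ with $sf^n\in g^nS$ for every $n\ge 0$, i.e. $s(f/g)^n\in S$ for all $n$; by definition this exhibits $f/g$ as almost integral over $S$. As a first reduction I would fix, for each $n$, an expression $f=ga_n+c_n$ with $a_n\in S$ and $c_n\in J^n$. Comparing two such expressions gives $c_n-c_m=g(a_m-a_n)$, so all the $c_n$ are congruent modulo $gS$ and, with $m=n+1$, the increments satisfy $c_n-c_{n+1}=g(a_{n+1}-a_n)\in J^n$. Since $c_1=f-ga_1$ again lies in the ideal $\bigcap_n(gS+J^n)$, and $f/g$ and $c_1/g$ differ by $a_1\in S$ (and almost integral elements over $S$ form a ring containing $S$), I may replace $f$ by $c_1$ and assume from the start that $f\in J$.

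Second, I would isolate the arithmetic that makes the problem essentially one–dimensional. Expanding $f^k=(ga_m+c_m)^k$ for an arbitrary $m$, every term other than $g^ka_m^k$ carries a positive power of $c_m\in J^m$; grouping the remaining terms by their power of $g$ gives $f^k-g^ka_m^k=\sum_{i=1}^{k}\binom{k}{i}a_m^{k-i}c_m^{i}g^{k-i}\in J^m$. Hence each $f^k$ lies in the $J$-adic closure $\bigcap_m(g^kS+J^m)$ of $g^kS$, and the desired inclusion $sf^k\in g^kS$ is implied by the finitely many divisibilities $sc_m^{\,i}\in g^iS$ for $1\le i\le k$, where for each $k$ we are free to choose $m=m(k)$ as large as we wish.

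The heart of the matter is to produce one witness $s$ that works for all $k$ simultaneously, and this is the step I expect to be the main obstacle; it is precisely where $J$-adic closedness of $S$ must be used in an essential way. I would try to exploit the compatible family $\{c_n\}$: the telescoping relations $c_n-c_{n+1}=g(a_{n+1}-a_n)$ together with $J$-adic closedness should force $\bigcap_m g^mS\ne 0$ in this situation (equivalently, $1/g$ is already almost integral over $S$), after which any nonzero $s\in\bigcap_m g^mS$ trivially satisfies $sf^k\in g^kS$ for all $k$; an alternative is to build the witness directly as a $J$-adic limit assembled from the increments $g(a_{n+1}-a_n)$, or to take $s=c_N$ for $N$ large and verify the estimates $sc_m^{\,i}\in g^iS$ by an induction using $c_m=c_N+g(a_N-a_m)$. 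The reason no naive choice such as $s=g$ or $s=c_1$ can work, and the reason closedness is unavoidable, is that the statement is vacuous in the Noetherian local case with $J$ the maximal ideal, where Krull's intersection theorem forces $f\in gS$; all the content lies in the non-Noetherian setting, where membership of $f$ in the $J$-adic closure of $(g)$ genuinely manufactures fractions that are almost integral but not in $S$.

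Finally, the ``in particular'' follows by contraposition. Take $S=\widehat{R^+}$ and $J=(p)$; since $\widehat{R^+}$ is $p$-adically complete it is in particular $p$-adically closed. If $\widehat{R^+}/g$ is not $p$-adically separated, choose $f\in\bigcap_n\big(g\widehat{R^+}+p^n\widehat{R^+}\big)$ with $f\notin g\widehat{R^+}$. The lemma gives that $f/g\in\widehat{R^+}[1/g]$ is almost integral over $\widehat{R^+}$, while $f/g\notin\widehat{R^+}$ because $\widehat{R^+}$ is a domain and $g\nmid f$. Hence $\widehat{R^+}$ is not completely integrally closed in $\widehat{R^+}[1/g]$, i.e.\ condition $(4)$ of Theorem~\ref{theorem: completeIntegrallyClosed} fails; therefore $(4)\Rightarrow(1)$.
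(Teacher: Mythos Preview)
Your reduction and the ``in particular'' paragraph are fine, but the proof of the main assertion has a genuine gap: you never actually produce the witness $s$. Your three suggested strategies do not work as written. The claim that the hypotheses force $\bigcap_m g^mS\neq 0$ is unsupported (and would prove the much stronger statement that $1/g$ itself is almost integral, which the lemma does not assert). The proposal $s=c_N$ fails already at the first step: you would need $c_N c_m^i\in g^iS$, and expanding via $c_m=c_N+g(a_N-a_m)$ this requires $c_N^{j+1}\in g^jS$, which you have no reason to know. Most importantly, your assertion that ``no naive choice such as $s=g$ \ldots\ can work'' is exactly backwards: $s=g$ is the correct witness, and your reasoning for dismissing it (vacuity in the Noetherian case) does not bear on whether the choice succeeds.

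Here is the missing step, which is what the paper does. You already have $a_n\in S$ with $f\equiv a_ng\pmod{J^{n+1}}$, hence $g(a_{n+1}-a_n)\in J^{n+1}$. Fix $k\ge 2$ and set $h_n:=a_n^kg$. Since $a_{n+1}-a_n$ divides $a_{n+1}^k-a_n^k$, we get $h_{n+1}-h_n=g(a_{n+1}^k-a_n^k)\in J^{n+1}$, so $\{h_n\}$ is $J$-adically Cauchy; by $J$-adic closedness of $S$ it has a limit $h\in S$. Then
\[
g^{k-1}h=\lim_n g^{k-1}(a_n^kg)=\lim_n (a_ng)^k=f^k,
\]
since $a_ng\to f$. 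Thus $g(f/g)^k=f^k/g^{k-1}=h\in S$ for every $k$, and $f/g$ is almost integral over $S$. Note this is precisely the ``build the witness as a $J$-adic limit'' idea you gestured at, but carried out for the sequence $a_n^kg$ rather than for the increments.
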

\begin{proof}
Write $f= \lim_i f_i$ where $f_i\equiv  f_{i+1}$ mod $J^{i+1}$ for all $i$, and similarly write $g=\lim_i g_i$. By assumption, we know that $f_i\equiv a_ig_i$ mod $J^{i+1}$ for some $a_i\in S$. We may replace $f_i$ by $a_ig_i$ without changing $f$ thus we may assume that $f_i=a_ig_i$ for every $i$. Now the condition $f_{n+1}-f_n\in J^{n+1}$ translates to $$a_{n+1}g_{n+1}-a_ng_n=a_{n+1}(g_{n+1}-g_n)+g_n(a_{n+1}-a_n)\in J^{n+1},$$ which is equivalent to $g_n(a_{n+1}-a_n)\in J^{n+1}$ since $g_{n+1}-g_n\in J^{n+1}$. Thus we have 
$$a_{n+1}-a_n\in (J^{n+1}:g_n).$$
Let $k\geq 2$ be an integer and let $h_n=a_n^kg_n$.
Since $a_{n+1}-a_n$ divides $a_{n+1}^k-a_n^k$, we have $a_{n+1}^k-a_n^k\in (J^{n+1}:g_n)$ and so $h_{n+1}-h_n\in J^{n+1}$. Hence the sequence $\{h_n\}_{n}$ defines an element $h\in S$.
By construction we see that $g^{k-1}h=f^k$. It follows that $g(f/g)^k\in S$ for all $k\in\mathbb{N}$, which implies that $f/g$ is almost integral over $S$.

The last conclusion follows by applying the first conclusion to $S=\widehat{R^+}$ and $J=(p)$. 
\end{proof}


We next prove $(1)\Rightarrow(2)$. This proof will utilize the full result in \cite{HeitmannAICSurprisinglyDomain} that the $\m$-adic completion of $R^+$ is an integral domain.

\begin{lemma}
\label{lemma: HeitmannNote}
With notation as in Theorem~\ref{theorem: completeIntegrallyClosed}, we have $(1)\Rightarrow(2)$.
\end{lemma}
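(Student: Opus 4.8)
The plan is to prove the contrapositive: assuming $v_Q(g) = \infty$ for some height one prime $Q \in \Spec(R^+)$ containing $p$, I will show that $\widehat{R^+}/g$ is not $p$-adically separated by exhibiting a nonzero element of $\cap_n (g, p^n)\widehat{R^+}$ that is not in $g\widehat{R^+}$. Write $g = \lim_i g_i$ with $g_i \in R^+$ and $g_i \equiv g_{i-1} \bmod p^i$. Since $v_Q(g) = \infty$, for every $n$ the image of $g$ in the $Q$-adic valuation ring $(R^+)_Q$ is divisible by arbitrarily high powers of $p$; more precisely, $v_Q(g_i) \to \infty$, so one can arrange (passing to the module-finite normal local domains $R_i = $ normalization of $R[g_0,\dots,g_i]$, along the chain of height one primes lying over $Q$) that $g$ becomes highly divisible by $p$ in a suitable localization at $Q$. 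The idea is then to build, using the structure of $R^+$ near $Q$, a coherent sequence producing a limit $f \in \widehat{R^+}$ witnessing that $g$ is ``$p$-adically divisible'' in a weak sense without being genuinely divisible.

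The key input is the first author's theorem from \cite{HeitmannAICSurprisinglyDomain} that the $\m$-adic completion of $R^+$ (not just the $p$-adic completion) is a domain; I expect it enters as follows. Localizing $R^+$ at $Q$ gives a rank one valuation ring $V = (R^+)_Q$ with $v_Q(g) = \infty$, i.e. $g \in \cap_n p^n V$. One wants to lift this divisibility back from $V$ to $\widehat{R^+}$. The natural approach is to choose elements $c_n \in R^+$ with $v_Q(g - c_n p^n)$ large (so $g - c_n p^n$ is, $Q$-adically, deep in the maximal ideal), and then use the domain property of the $\m$-adic completion together with the fact that $R^+$ is a ``big'' integrally closed ring — concretely that $v_Q(y) \geq m$ forces $y \in \overline{p^m R^+} = p^m R^+$ by \cite[Proposition 1.5.2]{SwansonHunekeIntegralClosure}, at least after clearing the single valuation $Q$. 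The subtlety is that $v_Q(y) \geq m$ does \emph{not} by itself force $y \in p^m R^+$: one needs the inequality at \emph{all} relevant valuations, so the construction must produce an element whose deficiency is concentrated exactly at $Q$, which is why passing to the local domains $R_i$ and controlling which height one primes of $R^+$ lie over the relevant prime of $R_i$ is essential — this is the same bookkeeping with chains $P_0 \subseteq P_1 \subseteq \cdots$ of height one primes used in the proof of Lemma~\ref{lemma: v uniform bounded}.

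Concretely, I would proceed as follows. First, fix $Q$ with $v_Q(g) = \infty$ and, as in Lemma~\ref{lemma: v uniform bounded}, choose a compatible chain of height one primes $P_i \subseteq R_i$ lying under $Q$; pass to the completed local ring $T_i$ of $(R_i)_{P_i}$, each a complete DVR (since $P_i$ has height one in the normal domain $R_i$), with uniformizer related to $p$. Second, in $R^+$ itself, since $v_Q(g)=\infty$, for each $n$ pick $h_n \in R^+$ with $v_Q(g - p^n h_n) = \infty$ as well — here one uses that $(R^+)_Q$ is a valuation ring in which $g$ is infinitely $p$-divisible, so $g/p^n$ makes sense in $(R^+)_Q$ and can be approximated by an element of $R^+$ modulo something of infinite $Q$-value; the approximation can be chosen inside the appropriate module-finite extension. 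Third, assemble $f := \lim_n p^n h_n \cdot(\text{correction})$; the correction terms are adjusted so that consecutive terms agree mod increasing powers of $p$ (this is where the $\m$-adic, not merely $p$-adic, domain property is used to guarantee the limit exists and is nonzero). Finally, verify $f \in \cap_n(g, p^n)\widehat{R^+}$ by construction, and $f \notin g\widehat{R^+}$ by evaluating $v_Q$: if $f = gz$ then $v_Q(f) = v_Q(g) + v_Q(z) = \infty$, yet the construction forces $v_Q(f) < \infty$ (the whole point of concentrating the deficiency at $Q$), a contradiction. The main obstacle I anticipate is the third step: controlling the approximation so that the ``deficiency'' of $g$ lives only at $Q$ and nowhere else, so that integral closedness of $R^+$ ($\overline{p^m R^+} = p^m R^+$) can be leveraged to produce genuine divisibility of the correction terms — this is precisely the delicate part of \cite{HeitmannAICSurprisinglyDomain} and I expect the proof to quote or adapt its main construction rather than redo it from scratch.
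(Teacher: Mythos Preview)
Your contrapositive setup is right, and you correctly anticipate that the domain property of the $\m$-adic completion of $R^+$ from \cite{HeitmannAICSurprisinglyDomain} is the key external input. But your proposed verification that $f\notin g\widehat{R^+}$ cannot work. You want to arrange $v_Q(f)<\infty$ and derive a contradiction from $v_Q(f)=v_Q(g)+v_Q(z)=\infty$. However, \emph{any} element $f\in\cap_n(g,p^n)\widehat{R^+}$ automatically satisfies $v_Q(f)=\infty$: writing $f=ag+p^nb$ gives $v_Q(f)\geq\min(v_Q(g),\,n)=n$ for every $n$, since $v_Q(g)=\infty$ and $v_Q(p)=1$. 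So $v_Q$ is blind to the distinction between the $p$-adic closure of $(g)$ and $(g)$ itself, and no amount of ``concentrating the deficiency at $Q$'' changes this.

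The paper's argument uses a completely different obstruction. One writes $f=\lim_n a_ng_n$ where $a_{n+1}-a_n\in(p^{n+1}:g_n)\setminus Q$ (possible exactly because $v_Q(g_n)=n+1$), so that $\{a_n\}$ converges in the $\m$-adic completion to some $a$ with $f=ag$ there. The crucial point is to choose the increments so that the image $\bar a$ of $a$ in the completion of $R^+/Q$ is \emph{transcendental} over $R/(Q\cap R)$: one introduces a second rank-one valuation $v$ on $R^+/Q$ (not $v_Q$!), and at each stage picks between two candidates $a_{n+1}',a_{n+1}''$ differing by a factor of $x$, using a ramification count to guarantee that at least one of them is not congruent (mod a high power of $x$) to any element satisfying a monic polynomial of degree $\leq n$ over $R$. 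Now if $f=bg$ with $b\in\widehat{R^+}$, the $\m$-adic domain property forces $b=a$; but $b\in\widehat{R^+}$ has image in $\widehat{R^+}/p=R^+/p$, hence $\bar b\in R^+/Q$ is integral over $R/(Q\cap R)$, contradicting the transcendence of $\bar a$. So the obstruction lives in $R^+/Q$ and is about algebraicity of the \emph{quotient} $f/g$, not about any valuation of $f$ itself.
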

\begin{proof}
First note that if $\dim(R)=1$, then $\widehat{R^+}$ is a rank one valuation ring, and $(1)$ and $(2)$ hold for all nonzero $g\in \widehat{R^+}$ (the only height one prime of $R^+$ is $\m_{R^+}$). Thus in what follows we assume $\dim(R)\geq 2$.

Assuming $(2)$ is not true, we will construct an element $f$ in the $p$-adic closure of $g\widehat{R^+}$. We write $g=\lim_n g_n$. Then, following the proof of Lemma~\ref{lemma: p adic closure in total integral closure} (in the case $S=\widehat{R^+}$ and $J=(p)$), it is enough to find a sequence $\{a_n\}_n$ of elements in $R^+$ with $a_{n+1}-a_n\in (p^{n+1}:g_n)$. Because then we can take $f_n=a_ng_n$ and $f=\lim_n f_n$ will be in the $p$-adic closure of $g\widehat{R^+}$. 

We will choose our sequence $\{a_n\}_n$ via a recursive procedure. 
Suppose there exists $v_Q$ such that $v_Q(g)=\infty$.  We write $g=\lim_ig_i$ and we can assume that the sequence $g_i$ is chosen so that $v_Q(g_i)=i+1$ for all $i\in\mathbb{N}$. 
Let $v$ be a rank one valuation on $R^+/Q$ which extends a discrete valuation on $R/(Q\cap R)$. Pick $x\in R$ such that $p,x$ is part of a system of parameters of $R$ and that $v(x)=1$.
First we choose $a_0\notin Q$ and let $\ell: =v(a_0)$. Fix an $m_0\in\mathbb{N}$ so that $m_0>\ell$. 
Now suppose $a_n$ and $m_n$ have been chosen with $m_n>\ell$.
Since $v_Q(g_n)=n+1$ for each $n$, we have $(p^{n+1}:g_n)\nsubseteq Q$.
Thus we may choose $c_{n+1}'\in (p^{n+1}:_{R^+}g_n)-Q$.
Let $\ell_n=v(c_{n+1}'x^{m_n})= v(c'_{n+1})+m_{n}\geq m_n$.
We may harmlessly multiply $c'_{n+1}$ by a small power of $x$ to assume that the subgroup of $\mathbb{Q}$ generated by $\ell$ and $\ell_n$ is $\langle1/j\rangle$ with $j>n^2$.
Choose $m_{n+1}\in\mathbb{N}$ so that $m_{n+1}>\ell_n$, and set $c_{n+1}''=c_{n+1}'x$.
Let $a_{n+1}'=a_n+c_{n+1}'x^{m_n}$ and $a_{n+1}''=a_n+c_{n+1}''x^{m_n}$.
We claim that we cannot simultaneously find elements $b_n',b_n''\in R^+$ which satisfy monic polynomials of degree $n$ and which are congruent modulo $x^{m_{n+1}}$ to $a_{n+1}'$ and $a_{n+1}''$ respectively.
For otherwise there is an integral extension of $R$ of degree at most $n^2$ which contains $b_n',b_n''$.
Since $v(b_n')=v(a_n')=\ell$ and 
$$v(b_n'-b_n'')=v(c'_{n+1}x^{m_n}-c''_{n+1}x^{m_n}) =v(c'_{n+1}x^{m_n})=\ell_n,$$ 
the ramification of the extension is at least $j>n^2$, a contradiction.
This allows us to choose $a_{n+1}$ so that it is either $a_{n+1}'$ or $a_{n+1}''$ and cannot be lifted to an element $b_n$ which satisfies a monic polynomial over $R$ of degree $n$.
In either case, $v(a_{n+1})=v(a_n)=\ell$. 

We claim that $f\notin g\widehat{R^+}$. To see this, suppose on the contrary that $f=bg$ for some $b\in \widehat{R^+}$.
Since the $\m$-adic completion of $R^+$ is an integral domain by \cite[Theorem 1.5]{HeitmannAICSurprisinglyDomain}, we must have $b=a$ where $a$ is the limit of the sequence $\{a_n\}_n$ in the $\m$-adic completion of $R^+$ (note that $a$ is well-defined since $a_{n+1}-a_n\in (x^{m_n})R^+\subseteq \m^{m_n}R^+$ and the sequence $\{m_n\}_n$ is increasing).
Thus $a\equiv b$ mod $Q$ and so $\overline{a}$, the image of $a$ in $R^+/Q$, is the image of an element in $R^+$.
In particular, $\overline{a}$ is algebraic over the fraction field of ${R}/(Q\cap R)$. 
In that case, it must satisfy a monic polynomial of degree $j$ for some $j$.
However, no such element can be congruent to $a_{j+1}$ modulo $(p,x^{m_{j+2}})$ and we have our desired contradiction.
\end{proof}

Putting Remark~\ref{remark: derived completion}, Lemma~\ref{lemma: v uniform bounded}, Lemma~\ref{lemma: bounded p-torsion}, Lemma~\ref{lemma: p adic closure in total
%
 integral closure}, and Lemma~\ref{lemma: HeitmannNote} together, we have proved that $$(1)\Leftrightarrow(1')\Leftrightarrow(2)\Leftrightarrow(2')\Leftrightarrow(3) \text{ and } (4)\Rightarrow(1)$$ in Theorem~\ref{theorem: completeIntegrallyClosed}. It remains to show that the already established equivalent conditions $(1)-(3)$ imply $(4)$. To prove this final
%
implication, we need to use various results on mixed characteristic closure operations in Section 3 and the following version of the Brian\c{c}on-Skoda theorem for $R^+$.

\begin{theorem}[{\cite[Corollary 5.1]{RodriguezVillalobosSchwedeBrianconSkoda}}, see also {\cite[Theorem 2.13]{HeitmannPlusClosureMixedChar}}]
\label{theorem: BrianconSkoda}
Let $I\subseteq R^+$ be an ideal generated by $h$ elements such that $p\in \sqrt{I}$. Then $\overline{I^{k+h-1}}\subseteq I^{k}$ for every $k\geq 0$. In particular, if $I$ is generated by two elements and $p\in\sqrt{I}$, then $\overline{I^{k}}\subseteq I^{k-1}$ for every $k>0$. 
\end{theorem}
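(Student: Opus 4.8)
The plan is to reduce to a module-finite extension of $R$ where everything is Noetherian, prove the statement inside the big Cohen--Macaulay algebra $\widehat{R^+}$ (Theorem~\ref{theorem: BB}), and descend back to $R^+$ using that the ideals in question contain a power of $p$. It suffices to treat $k\geq 1$. Given $z\in\overline{I^{k+h-1}}$ with $I=(f_1,\dots,f_h)R^+$, an equation of integral dependence for $z$ and a relation $p^M\in(f_1,\dots,f_h)R^+$ each involve only finitely many elements of $R^+$, so there is a module-finite normal local domain $S$ with $R\subseteq S\subseteq R^+$, $f_1,\dots,f_h,z\in S$, $p^M\in(f_1,\dots,f_h)S$, and $z\in\overline{(f_1,\dots,f_h)^{k+h-1}S}$. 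Since $S$ is again a Noetherian complete local domain of mixed characteristic $(0,p)$ with $S^+=R^+$, we may replace $R$ by $S$ and are reduced to: if $I=(f_1,\dots,f_h)\subseteq R$ with $p^M\in I$ and $z\in\overline{I^{k+h-1}}$, then $z\in I^kR^+$. Note that $p^{Mk}\in I^k$.

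Because $I^k$ contains a power of $p$, the descent from $\widehat{R^+}$ to $R^+$ is automatic: $R^+/I^kR^+$ is killed by $p^{Mk}$, so the natural map $R^+/I^kR^+\to\widehat{R^+}/I^k\widehat{R^+}$ is an isomorphism (both sides equal $(R^+/p^{Mk}R^+)\otimes_{R^+/p^{Mk}R^+}R^+/I^kR^+$, using the standard fact $\widehat{R^+}/p^{Mk}\widehat{R^+}\cong R^+/p^{Mk}R^+$); hence $I^k\widehat{R^+}\cap R^+=I^kR^+$, and it is enough to prove $z\in I^k\widehat{R^+}$. To prove this I would first pass, after a faithfully flat local extension that makes the residue field infinite (a standard maneuver, descending at the end), to a minimal reduction $J=(g_1,\dots,g_\ell)$ of $I$, where $\ell=\ell(I)\leq h$ is the analytic spread; then $\overline{I^{k+h-1}}=\overline{J^{k+h-1}}$, and since $J^{k+h-\ell}\subseteq I^k$ (as $h\geq\ell$) it suffices to show $\overline{J^{k+h-1}}\subseteq J^{k+h-\ell}\widehat{R^+}$. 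When $\ell=\height(I)$, the $g_i$ form a partial system of parameters of $R$, hence a regular sequence on $\widehat{R^+}$ by Theorem~\ref{theorem: BB}; the associated graded ring of the complete intersection $(g_1,\dots,g_\ell)$ on $\widehat{R^+}$ is then a polynomial ring, and the classical Briançon--Skoda estimate for complete intersections (the Artin--Rees/Koszul argument) goes through verbatim in $\widehat{R^+}$, yielding $\overline{J^{k+h-1}\widehat{R^+}}\subseteq J^{k+h-\ell}\widehat{R^+}$, which contains $z$ since $z\in\overline{J^{k+h-1}}\subseteq\overline{J^{k+h-1}\widehat{R^+}}$.

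The main obstacle is the case $\ell(I)>\height(I)$, where the generators of a minimal reduction of $I$ no longer form a regular sequence on $\widehat{R^+}$ and the complete-intersection argument collapses; here one must route through a closure operation that is insensitive to this failure. The natural candidate is the extended plus closure, via the Briançon--Skoda inequality $\overline{I^{k+h-1}}\subseteq(I^k)^{epf}$ for an arbitrary $h$-generated ideal --- this is Heitmann's \cite[Theorem~2.13]{HeitmannPlusClosureMixedChar} when $h=2$ and \cite[Corollary~5.1]{RodriguezVillalobosSchwedeBrianconSkoda} in general --- but one must then push $(I^k)^{epf}$ down into $I^k\widehat{R^+}$, not merely into some larger closure, and it is here that Bhatt's theorem and the Section~3 results are indispensable: one uses Lemma~\ref{lemma: epf and BCM closure} to realize $(I^k)^{epf}$ via large perfectoid big Cohen--Macaulay $R^+$-algebras, reduces the dimension by passing to completions of localizations as in the proof of Lemma~\ref{lemma: epf is p almost in ideal}, and exploits that $R^+/I^kR^+$ is killed by a power of $p$ so that its associated primes, and hence the colon modules that arise, are controlled. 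Proving the $epf$-Briançon--Skoda inequality for general $h$ and carrying out this descent is the genuinely hard part: Heitmann's route in the two-generated case combines a valuative Briançon--Skoda estimate with colon-capturing in $R^+$ ``up to $p^{1/n}$'', while the route through weak functoriality of big Cohen--Macaulay algebras reduces to the complete-intersection case not by manufacturing a regular sequence but by dominating auxiliary big Cohen--Macaulay algebras built from complete intersections, keeping track of the power of $p$ at every step.
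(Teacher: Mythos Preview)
This theorem is not proved in the paper: it is quoted from the cited references and invoked as a black box in the proof of $(1)\Rightarrow(4)$ in Theorem~\ref{theorem: completeIntegrallyClosed}. There is therefore no argument in the paper for your sketch to be compared against.

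Evaluated on its own, your proposal has a circularity at the decisive step. Your reduction to a module-finite extension and the descent $I^k\widehat{R^+}\cap R^+=I^kR^+$ via $p\in\sqrt{I}$ are both correct and are exactly the maneuvers one wants. But once you reach the case $\ell(I)>\height(I)$, your route is to invoke $\overline{I^{k+h-1}}\subseteq(I^k)^{epf}$, and for that inclusion you cite precisely \cite[Theorem~2.13]{HeitmannPlusClosureMixedChar} and \cite[Corollary~5.1]{RodriguezVillalobosSchwedeBrianconSkoda} --- the two references to which the theorem itself is attributed. At that point you are quoting the result rather than proving it. The subsequent descent from $(I^k)^{epf}$ to $I^kR^+$ is also not delivered by the Section~3 lemmas as stated: Lemma~\ref{lemma: epf and BCM closure} realizes $I^{epf}$ inside $IB$ only for \emph{sufficiently large} perfectoid big Cohen--Macaulay $B$, not for $B=\widehat{R^+}$ specifically, and Lemma~\ref{lemma: epf is p almost in ideal} yields only a $(p^{1/p^\infty})$-almost containment, and only for two-generated parameter ideals. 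Finally, your ``easy'' complete-intersection case also needs care: the assertion that the classical Artin--Rees/Koszul Brian\c{c}on--Skoda argument ``goes through verbatim'' in the non-Noetherian ring $\widehat{R^+}$ would itself need justification, since those arguments typically use Noetherian hypotheses in an essential way.
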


We need one more ingredient.

\begin{lemma}
\label{lemma: regular sequence}
Suppose $g$ is a nonzero element in $\widehat{R^+}$ such that $\widehat{R^+}/g$ is $p$-adically separated. Then there exists an $N>0$ and a nonzero element $g'\in \widehat{R^+}$ such that $p^Ng'\in g\widehat{R^+}$ and $p, g'$ form a regular sequence in $\widehat{R^+}$. 
\end{lemma}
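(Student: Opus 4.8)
\textbf{Plan for the proof of Lemma~\ref{lemma: regular sequence}.}
The goal is to extract from $g$ an element $g'$ dividing a power of $g$ (up to a power of $p$) that forms a regular sequence with $p$. Since $\widehat{R^+}$ is a domain, $p$ is a nonzerodivisor, so the only real content is to produce a $g'$ which is a nonzerodivisor modulo $p$, i.e., $p, g'$ is a regular sequence. The natural strategy is to ``remove the common factors of $g$ with $p$'': write $g$ in terms of a convergent sequence $g = \lim_n g_n$ with $g_n \in R^+$ and $g_{n+1} \equiv g_n \bmod p^{n+1}$, and try to factor out the part of $g$ that is divisible by powers of $p$ along the height one primes containing $p$. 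Concretely, I would first use the hypothesis (together with Theorem~\ref{theorem: completeIntegrallyClosed}, specifically $(1)\Rightarrow(2')$, which is already established at this point in the paper) to get a uniform bound $N$ with $v_Q(g) < N$ for all height one primes $Q \in \Spec(R^+)$ containing $p$.

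The key construction: I expect one builds $g'$ as a limit $g' = \lim_n g_n'$ where $g_n' \in R^+$ is obtained from $g_n$ by dividing out the ``$p$-divisible part along height one primes,'' so that $g_n = p^{?} \cdot (\text{something}) $ — more precisely, one wants $v_Q(g_n') = 0$ for all relevant $Q$ while $p^N g_n' \in g_n R^+$ (using that $v_Q(g_n) \le v_Q(g) < N$, so $g_n / p^{\lceil v_Q(g_n)\rceil}$ still makes sense after clearing denominators, and $\overline{p^k R^+} = p^k R^+$ and more generally integral closures of principal ideals in the normal domain $R^+$ are principal, which lets one pass from valuation-theoretic divisibility to actual divisibility). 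The bounded-torsion/uniform-bound input is what guarantees the sequence $\{g_n'\}$ is again $p$-adically Cauchy (the ``corrections'' $g_{n+1}' - g_n'$ land in high powers of $p$), so that $g' := \lim_n g_n' \in \widehat{R^+}$ exists and is nonzero, with $p^N g' \in g\widehat{R^+}$. Then $g'$ has $v_Q(g') = 0$ for every height one prime $Q$ of $R^+$ containing $p$; combined with the fact (from Lemma~\ref{lemma: v(g) is not infinity}, or directly) that $g'$ is not in any minimal prime of $p$ in $R^+$ — and a passage to $\widehat{R^+}$ — one concludes $g'$ is a nonzerodivisor on $\widehat{R^+}/p$, i.e., $p, g'$ is a regular sequence in $\widehat{R^+}$.

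The delicate point — and the main obstacle — is the claim that $g'$ is a nonzerodivisor modulo $p$ in $\widehat{R^+}$, not merely in $R^+$. It is easy to arrange $v_Q(g') = 0$ for height one primes $Q$ of $R^+$, which handles $R^+/p$; but $\widehat{R^+}$ can have associated primes with no counterpart in $R^+$ (this is exactly the phenomenon highlighted in the Notation paragraph and Proposition~\ref{proposition: construction of bad g}), so one must argue that $g'$ avoids those as well. Here is where I would invoke that $\widehat{R^+}/p = R^+/p$ (the $p$-adic completion does not change the quotient mod $p$), so $\depth$ and the nonzerodivisor condition mod $p$ are literally statements about $R^+/p$ and the image of $g'$ therein; since $R^+/p$ is reduced (indeed an integral domain: $R^+/pR^+$ is an integral domain because $p$ is a prime element... actually $p R^+$ need not be prime, but $R^+/p$ is reduced and we only need $g'$ to avoid its minimal primes, each of which is a height one prime $Q \supseteq p$), the condition $v_Q(g') = 0$ for all such $Q$ is exactly the assertion that $g'$ is a nonzerodivisor on $R^+/p = \widehat{R^+}/p$, completing the proof. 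I would double-check the reduction $\widehat{R^+}/p \cong R^+/p$ and the identification of minimal primes of $p$ with rank-one valuation centers, since these are the load-bearing structural facts.
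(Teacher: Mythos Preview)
Your framework is correct: you rightly invoke the already-established implication $(1)\Rightarrow(2')$ to get a uniform bound $N$ with $v_Q(g)<N$ for all height one primes $Q\ni p$, and you correctly identify that $\widehat{R^+}/p=R^+/p$, so that ``$p,g'$ regular'' reduces to the image of $g'$ avoiding the minimal primes of $pR^+$. However, the heart of your proposal --- building $g'$ as $\lim_n g_n'$ where $g_n'$ is obtained from $g_n$ by ``dividing out the $p$-divisible part'' --- has two gaps. First, the element $g_n'$ is not well-defined as described: since the values $v_Q(g_n)$ vary with $Q$, there is no single power of $p$ to divide by, and $R^+$ is not a UFD, so there is no canonical ``$p$-primary factor'' of $g_n$ to remove. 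Second, even granting some choice of $g_n'$ (which \emph{can} be made via finite prime avoidance in a Noetherian $R_n\supseteq R[g_0,\dots,g_n]$), nothing forces the sequence $\{g_n'\}$ to be $p$-adically Cauchy; your assertion that the uniform bound ``guarantees'' this is not justified. (One can in fact recursively correct $g_n'$ to make the sequence Cauchy, but at that point the limit is unnecessary: a single $g_n'$ plus one correction already lands in $(g:_{\widehat{R^+}}p^N)$.) A minor point: $R^+/p$ is \emph{not} reduced --- for instance $p^{1/2}$ is nilpotent modulo $p$ --- though your conclusion that zerodivisors of $R^+/p$ lie in $\bigcup_Q Q$ survives, via $\overline{pR^+}=pR^+$.

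The paper's argument sidesteps all of this by working directly with the colon ideal. One shows that the image $I$ of $(g:_{\widehat{R^+}}p^N)$ in $\widehat{R^+}/p=R^+/p$ is not contained in $Q(R^+/p)$ for any height one $Q\ni p$: indeed, since $v_Q(g)<N$, the localized colon ideal at $Q$ is the whole ring, so one easily produces an element of $(g:_{\widehat{R^+}}p^N)$ outside $Q$. Thus $I$ has height at least one in $R^+/p$, and (finite) prime avoidance yields a nonzerodivisor $\overline{g'}\in I$; any lift $g'\in(g:_{\widehat{R^+}}p^N)$ then satisfies $p^Ng'\in g\widehat{R^+}$ with $p,g'$ a regular sequence. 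No limits are needed --- the very identification $\widehat{R^+}/p=R^+/p$ you note at the end is what makes a one-step argument possible.
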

\begin{proof}
By Lemma~\ref{lemma: HeitmannNote} and Lemma~\ref{lemma: v uniform bounded}, there exists $N>0$ such that $v_Q(g)<N$ { for all height one primes $Q$ of $R^+$ which contain $p$.} Since for each {such} $Q\in\Spec(R^+)$, $(R^+)_Q$ is a valuation ring of rank one, we have 
$$I:= (g:_{\widehat{R^+}}p^N)(\widehat{R^+}/p) \nsubseteq Q(\widehat{R^+}/p)=Q(R^+/p).$$
It follows that $I$ has height at least one in $R^+/p$. This implies that there exists $\overline{g'}\in I$ that is a nonzerodivisor on $R^+/p$. 
{Take any preimage of $\overline{g'}$ in $\widehat{R^+}$ and we find} that there exists $g'\in (g:_{\widehat{R^+}}p^N)$ such that $p, g'$ is a regular sequence on $\widehat{R^+}$ as wanted.
\end{proof}

\begin{proof}[Proof of Theorem~\ref{theorem: completeIntegrallyClosed}]
As noted before, it remains to prove $(1)\Rightarrow(4)$. We first prove that $\widehat{R^+}$ is completely integrally closed in $\widehat{R^+}[1/p]$. Suppose ${r}/{p^m}$ is almost integral over $\widehat{R^+}$. Writing $r=s+p^mt$ where $s\in R^+$ and $t\in \widehat{R^+}$, we have ${r}/{p^m}={s}/{p^m}+t$ and thus $z:={s}/{p^m}$ is almost integral over 
%
$\widehat{R^+}$.  It suffices to show that $z\in R^+$. By the definition of almost integral elements, we know that there exists $c\in\widehat{R^+}$ so that $cs^n\in p^{mn}\widehat{R^+}$ for all $n$. Writing $c=\lim_j c_j$ we know that 
$$c_{mn}s^n\in p^{mn}\widehat{R^+}\cap R^+=p^{mn}R^+$$ for all $n$. It follows that 
$$c_{mn}^{1/n}s\in \overline{p^mR^+}=p^mR^+.$$
By Lemma~\ref{lemma: v(g) is not infinity}, there exists an $\mathbb{R}$-valuation $v$ centered on $\m_{\widehat{R^+}}$ such that $v(c)=\lambda<\infty$. It follows that $v(c_{mn})<\lambda+1$ for all $n$ sufficiently large and thus $v(c_{mn}^{1/n})\to 0$ as $n\to\infty$. By \cite[Corollary 2.5.3]{CaiLeeMaSchwedeTuckerPerfdSignature}, there exists a
%
 perfectoid big Cohen-Macaulay $R^+$-algebra $B$ such that $s\in p^mB\cap R^+$. But then by Lemma~\ref{lemma: epf and BCM closure} (enlarging $R$ if necessary so that we may assume $s\in R$), we know that 
$$s\in (p^m)^{epf}\subseteq \overline{p^mR^+}=p^mR^+.$$
%
Thus $z={s}/{p^m}\in R^+$ as wanted.

Next, we show that $\widehat{R^+}$ is completely integrally closed in $\widehat{R^+}[1/g]$ when $p,g$ is a regular sequence in $\widehat{R^+}$. Note that this implies $g,p$ is a regular sequence on $\widehat{R^+}$ as $\widehat{R^+}$ is an integral domain (and in particular, $\widehat{R^+}/g$ is $p$-adically separated since $\widehat{R^+}/g$ is $p$-torsion free and derived $p$-complete, see Remark~\ref{remark: derived completion}). Suppose $f/g^e$ is almost integral over $\widehat{R^+}$, replacing $g$ by $g^e$ we may assume $e=1$. Thus we assume there exists $h\in \widehat{R^+}$ so that $hf^m\in g^m\widehat{R^+}$ for all $m>0$, and we need to show $f\in g\widehat{R^+}$. Writing $f=\lim_if_i$, $g=\lim_ig_i$, and 
%
$h=\lim_ih_i$ for all $m,j$, we have
$$h_jf_j^m\in(g_j^m, p^{j+1})\widehat{R^+}\cap R^+=(g_j^m,p^{j+1})R^+\subseteq (g_j^m,p^{j})R^+.$$ 
It follows that for all $m, j, k \in \mathbb{N}$, we have
%
$$h_j^{1/m}f_j\in \overline{(g_j, p^{j/m})R^+} \subseteq \overline{(g_j^{1/k}, p^{j/mk})^kR^+}\subseteq (g_j^{1/k},p^{j/mk})^{k-1}R^+\subseteq (g_j^{1-1/k},p^{j/mk})R^+ ,$$
where we have used Theorem~\ref{theorem: BrianconSkoda} for the second $\subseteq$. Multiplying by $g_j^{1/k}$, we obtain that 
$$g_j^{1/k}h_j^{1/m}f_j \in (g_j, p^{j/mk})R^+ \text{ for all $m, j, k$.}$$
%
Now we take $j=mkn$ and the above becomes 
$$g_{mkn}^{1/k}h_{mkn}^{1/m}f_{mkn} \in (g_{mkn}, p^{n})R^+ \text{ for all $m,k, n$.}$$
{Since $f_{mkn}\equiv f_n$ and $g_{mkn}\equiv g_n$ mod $p^n$,} we have 
$$g_{mkn}^{1/k}h_{mkn}^{1/m}f_{n} \in (g_{n}, p^{n})R^+ \text{ for all $m,k, n$.}$$
We now invoke Lemma~\ref{lemma: v(g) is not infinity} to find an $\mathbb{R}$-valuation $v$ centered on $\m_{\widehat{R^+}}$ so that $v(gh)=\lambda<\infty$. It follows that $v(g_{mkn}h_{mkn})<\lambda+1$ for all $m,k$ sufficiently large and thus $v(g_{mkn}^{1/k}h_{mkn}^{1/m})\to 0$ when $m,k \to\infty$. It follows from \cite[Corollary 2.5.3]{CaiLeeMaSchwedeTuckerPerfdSignature} that there exists a perfectoid big Cohen-Macaulay $R^+$-algebra $B$ such that $f_n\in (g_n, p^n)B\cap R^+$. By Lemma~\ref{lemma: epf
%
and BCM closure} (again enlarging $R$ if necessary to assume $f_n, g_n \in R$) we have 
$$f_n\in (g_n, p^n)^{epf}\subseteq ((g_n, p^n)R^+)^{epf}.$$ 
Since $p, g$ is a regular sequence on $\widehat{R^+}$, we have $p^n, g_n$ is a regular sequence on $R^+$ and thus by Lemma~\ref{lemma: epf is p almost in ideal}, $(p^{1/p^\infty})f_n\in (g_n, p^n)R^+$ and in particular $pf_n\in (g_n, p^n)R^+$. It follows that $pf \in (g, p^n)\widehat{R^+}$ for all $n$ and thus 
$$pf \in \cap_n(g, p^n)\widehat{R^+}=g\widehat{R^+}$$
where the equality follows from the fact that $\widehat{R^+}/g$ is $p$-adically separated. Since $g, p$ is a regular sequence on $\widehat{R^+}$, the above implies that $f\in g\widehat{R^+}$ as wanted.

Finally, {we consider the general case and only assume} $\widehat{R^+}/g$ is $p$-adically separated. By Lemma~\ref{lemma: regular sequence}, there exists $g'\in \widehat{R^+}$ such that $p,g'$ is a regular sequence and $\widehat{R^+}[1/g]\subseteq \widehat{R^+}[1/pg']$. It is enough to show that $\widehat{R^+}$ is completely integrally closed in $\widehat{R^+}[1/pg']$. Now if $\eta:={r}/{(pg')^m} \in \widehat{R^+}[1/pg']$ is almost integral over $\widehat{R^+}$, then $p^m\eta={r}/{g'^m} \in \widehat{R^+}[1/g']$ is almost integral over $\widehat{R^+}$. By the already established case above, we know that $p^m\eta \in \widehat{R^+}$. But then $\eta\in \widehat{R^+}[1/p]$ is almost integral over $\widehat{R^+}$ and thus by the already established case above, $\eta\in \widehat{R^+}$ as wanted. 
\end{proof}

We record the following consequence of Theorem~\ref{theorem: completeIntegrallyClosed}. This is a slight extension of \cite[Proposition 3.5]{KinouchiShimomotoPerfectoidRingsGaloisCohomology} (the result was originally obtained in \cite[Proposition A.6]{AndreattaGeneralizedRingNorms}). 

\begin{corollary}
\label{corollary: A_infty}
Let $(A,\m, k)$ be a complete regular local ring of mixed characteristic $(0,p)$ and let $A_\infty$ be the perfectoid ring constructed in the paragraph before Lemma~\ref{lemma: p is test element}. Then $A_\infty$ is a domain completely integrally closed in its fraction field.
\end{corollary}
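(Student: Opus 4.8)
The plan is to exhibit $A_\infty$ as the $p$-adic completion of an increasing union of complete \emph{regular} local rings and then transport the statement to $\widehat{A^+}$, where Theorem~\ref{theorem: completeIntegrallyClosed} applies. Writing $A = C_k[[x_2,\dots,x_d]]$, for each $N$ set
$A_\infty^{(N)} := (A\widehat{\otimes}_{C_k}W(k^{1/p^N}))[p^{1/p^N},x_2^{1/p^N},\dots,x_d^{1/p^N}] \cong W(k^{1/p^N})[p^{1/p^N}][[x_2^{1/p^N},\dots,x_d^{1/p^N}]]$;
each $A_\infty^{(N)}$ is a complete regular local ring of mixed characteristic $(0,p)$ and dimension $d$, the transition maps $A_\infty^{(N)}\to A_\infty^{(N+1)}$ are local and flat (miracle flatness: the closed fibres are Artinian), and $A_\infty$ is the $p$-adic completion of $A_\infty^\circ := \bigcup_N A_\infty^{(N)}$. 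Since $A_\infty^\circ$ is a normal domain contained in $A^+$ with $A^+$ integral over it, $A_\infty^\circ$ is integrally closed in $A^+$, whence $p^mA^+\cap A_\infty^\circ = p^mA_\infty^\circ$; completing, $A_\infty = (A_\infty^\circ)^\wedge_p$ embeds into $\widehat{A^+} = (A^+)^\wedge_p$, so $A_\infty$ is a domain by \cite{HeitmannAICSurprisinglyDomain} (that $A_\infty$ is a normal domain is also in \cite{AndreattaGeneralizedRingNorms,KinouchiShimomotoPerfectoidRingsGaloisCohomology}).

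The crux is the claim that every nonzero $g\in A_\infty$ satisfies condition $(2)$ of Theorem~\ref{theorem: completeIntegrallyClosed} for $R=A$, i.e.\ $v_Q(g)<\infty$ for every height one prime $Q\subseteq A^+$ containing $p$. This is where regularity of the $A_\infty^{(N)}$ enters: the parameter $\pi_N := p^{1/p^N}$ is prime in $A_\infty^{(N)}$ with $\pi_N^{p^N}=p$, and $Q\cap A_\infty^{(N)}$ is a height one prime (going down, $A_\infty^{(N)}$ normal) containing $\pi_N$, hence equal to $(\pi_N)$; therefore $v_Q$ restricted to $A_\infty^{(N)}$ is $\tfrac1{p^N}\ord_{\pi_N}$, so $z\in A_\infty^{(N)}$ with $v_Q(z)\ge m$ lies in $(\pi_N^{p^N})^m = p^m A_\infty^{(N)}$. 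Now write $g = \lim_i g_i$ with $g_i\in A_\infty^{(N_i)}\subseteq A^+$: if some $g_i$ has $v_Q(g_i)<i+1$, then $g_j\equiv g_i \pmod{p^{i+1}}$ for $j\ge i$ forces $v_Q(g_j)=v_Q(g_i)$ for all such $j$, so $v_Q(g)=v_Q(g_i)<\infty$; and if $v_Q(g_i)\ge i+1$ for all $i$, then $g_i\in p^{i+1}A_\infty^{(N_i)}\subseteq p^{i+1}\widehat{A^+}$, so $g\in\bigcap_i p^i\widehat{A^+}=0$, a contradiction. By Theorem~\ref{theorem: completeIntegrallyClosed}, for every nonzero $g\in A_\infty$ the quotient $\widehat{A^+}/g$ has bounded $p$-power torsion and $\widehat{A^+}$ is completely integrally closed in $\widehat{A^+}[1/g]$.

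Next I would descend. Let $\eta\in\Frac(A_\infty)$ be almost integral over $A_\infty$, write $\eta = f/g$ with $f,g\in A_\infty$, $g\ne 0$, and fix $0\ne c\in A_\infty$ with $cf^n\in g^nA_\infty$ for all $n$. Inside $\widehat{A^+}$ this says $\eta$ is almost integral over $\widehat{A^+}$ in $\widehat{A^+}[1/g]$, so by the previous paragraph $\eta\in\widehat{A^+}$, and it remains to descend $\eta$ to $A_\infty$. For each $N$ pick $f_N,g_N\in A_\infty^{(N)}$ (enlarging the index if needed) with $f\equiv f_N$ and $g\equiv g_N \pmod{p^N}$; then $f_N\equiv g_N\eta \pmod{p^N\widehat{A^+}}$, i.e.\ $f_N\in (g_N,p^N)\widehat{A^+}\cap A_\infty^{(N)}$. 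Since $(A_\infty^{(N)})^+=A^+$, Theorem~\ref{theorem: BB} makes $\widehat{A^+}$ a balanced big Cohen--Macaulay algebra over the regular local ring $A_\infty^{(N)}$, hence faithfully flat over it (see \cite{BhattIyengarMaRegularRingsPerfectoidAlgebras}), so $(g_N,p^N)\widehat{A^+}\cap A_\infty^{(N)} = (g_N,p^N)A_\infty^{(N)}$ and we may write $f_N = g_Nh_N + p^Nw_N$ with $h_N,w_N\in A_\infty^{(N)}\subseteq A_\infty^\circ$. Then $g(h_N-\eta)\in p^N\widehat{A^+}$, and bounded $p$-power torsion of $\widehat{A^+}/g$ (together with $\widehat{A^+}$ being a domain) gives $h_N-\eta\in p^{N-M}\widehat{A^+}$ for a fixed $M$; hence $\eta = \lim_N h_N\in (A_\infty^\circ)^\wedge_p = A_\infty$, as desired.

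I expect the main obstacle to be exactly this reduction to $\widehat{A^+}$: both the valuation estimate of the second paragraph and the flat descent of the third lean on the regularity of the intermediate rings $A_\infty^{(N)}$ — regularity is what makes $Q\cap A_\infty^{(N)}$ principal (so that $v_Q$ there is an honest $\ord$-valuation) and what makes $\widehat{A^+}$ faithfully flat over $A_\infty^{(N)}$ — and it is precisely this feature, absent for a general $\widehat{R^+}$, that excludes the pathological elements constructed in Proposition~\ref{proposition: construction of bad g}.
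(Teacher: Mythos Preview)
Your proof is correct and follows the same overall architecture as the paper's: embed $A_\infty \hookrightarrow \widehat{A^+}$, check that every nonzero $g\in A_\infty$ satisfies condition~(2) of Theorem~\ref{theorem: completeIntegrallyClosed}, and descend via the faithful flatness that regularity of the finite levels together with Theorem~\ref{theorem: BB} provides. The differences are only in execution. For condition~(2), the paper uses the internal valuation $v(z)=\max\{n/p^e:z\in p^{n/p^e}A_\infty\}$ to factor $g=p^{n/p^e}g'$ with $g'\notin(p^{1/p^\infty})$, so that $v_Q(g)=n/p^e$ for every $Q$ at once; you instead argue at each finite level that $Q\cap A_\infty^{(N)}=(p^{1/p^N})$ and deduce $v_Q|_{A_\infty^{(N)}}=\tfrac{1}{p^N}\ord_{\pi_N}$. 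For the descent, the paper first establishes the auxiliary claim that $A_\infty/g$ is $p$-adically separated (via $A_\infty/g\hookrightarrow A_\infty/p^{n/p^e}\oplus A_\infty/g'$) and then concludes $f\in\bigcap_n(g^m,p^n)A_\infty=g^mA_\infty$ from faithful flatness of $A_\infty/p^n\to\widehat{A^+}/p^n$; you sidestep separatedness of $A_\infty/g$ altogether and instead use bounded $p$-power torsion of $\widehat{A^+}/g$ (condition~(3), plus $\widehat{A^+}$ a domain) to show your approximants $h_N$ form a Cauchy sequence in $A_\infty$ converging to $\eta$. Both routes work; the paper's is a bit cleaner, yours a bit more hands-on.
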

\begin{proof}
Following the first paragraph of \cite[Proof of Proposition 3.5]{KinouchiShimomotoPerfectoidRingsGaloisCohomology}, we fix an injection $A_\infty\to \widehat{A^+}$ (in particular, this implies $A_\infty$ is a domain by \cite[Proposition 1.6]{HeitmannAICSurprisinglyDomain}) and we have a $\mathbb{Z}[1/p]$-valued valuation $v$ on $A_\infty$ so that for any $x\in A_\infty$,
$$v(z)=\max\{\frac{n}{p^e} \mid z\in p^{n/p^e}A_\infty\}.$$
Thus for any $0\neq g\in A_\infty$, we can write $g=p^{n/p^e}g'$ where $n/p^e=v(g)$ and $g'\notin (p^{1/p^\infty})A_\infty$. It follows that $p, g'$ is a regular sequence on $A_\infty$ as $(p^{1/p^\infty})A_\infty$ is a prime ideal in $A_\infty$ (which is the center of $v$). 

\begin{claim}
\label{claim: p-adic separated}
For any $g\in A_\infty$, both $A_\infty/g$ and $\widehat{R^+}/g$ are $p$-adically separated.
\end{claim}
\begin{proof}
We may assume $g\neq 0$ and write $g=p^{n/p^e}g'$ as above. We have an inclusion:
$$A_\infty/g \hookrightarrow A_\infty/p^{n/p^e} \oplus A_\infty/g'.$$
Since both $A_\infty/p^{n/p^e}$ and $A_\infty/g'$ are $p$-adically separated ($A_\infty/g'$ is $p$-torsion free and derived $p$-complete and hence classically $p$-adically complete, see Remark~\ref{remark: derived completion}), so is $A_\infty/g$. Finally, to see that $\widehat{R^+}/g$ is $p$-adically separated, it is enough to note that $v_Q(g)={n}/{p^e}<\infty$ for all height one primes $Q\in\Spec(R^+)$ that contain $p$ and apply Theorem~\ref{theorem: completeIntegrallyClosed} $(2)\Rightarrow(1)$.\footnote{Alternatively, one can also use the $p$-complete flatness of $A_\infty\to\widehat{R^+}$ to see that $p,g'$ is a regular sequence on $\widehat{R^+}$ and use the same argument as in the case of $A_\infty/g$ to show that $\widehat{R^+}/g$ is $p$-adically separated.}
\end{proof}

Now we suppose $f/g^m$ is almost integral over $A_\infty$. Then $f/g^m$ is almost integral over $\widehat{A^+}$ and by Theorem~\ref{theorem: completeIntegrallyClosed} and Claim~\ref{claim: p-adic separated}, we have $f\in g^m\widehat{A^+}$. On the other hand, we know that $A_\infty/p^n\to \widehat{A^+}/p^n$ is faithfully flat since $A_\infty$ is the $p$-adic completion of a direct limit of regular local rings and $\widehat{A^+}$ is balanced big Cohen-Macaulay by Theorem~\ref{theorem: BB}. It follows that
$$f\in A_\infty \cap (g^m\widehat{A^+}) \subseteq \cap_n \left((g^m, p^n)\widehat{A^+} \cap A_\infty\right)= \cap_n (g^m, p^n)A_\infty = g^mA_\infty$$
where the last equality follows from Claim~\ref{claim: p-adic separated}. Hence $f/g^m\in A_\infty$, that is, $A_\infty$ is completely integrally closed in its fraction field.
\end{proof}


Lastly, we construct elements $g\in\widehat{R^+}$ such that $\widehat{R^+}/g$ is not $p$-adically separated whenever $\dim(R)\geq 2$. 

\begin{proposition}
\label{proposition: construction of bad g}
Suppose $\dim(R)\geq 2$ and let $v$ be any $\mathbb{R}$-valuation on $\widehat{R^+}$ whose center on $R^+$ contains $p$ and is not $\m_{R^+}$, normalized so that $v(p)=1$.
Then there exists $0\neq g\in \widehat{R^+}$ such that $v(g)=\infty$ (i.e., $g$ is in the support of $v$).
More precisely, if $x\in\m_{R^+}$ is such that $p,x$ is a regular sequence on $R^+$ and $x$ is not in the center of $v$, then $g$ can be taken as the limit of a sequence $\{g_i\}_{i=1}^\infty$ in $R^+$ so that 
\begin{enumerate}
  \item $g_1$ satisfies the equation $T^2+xT+p^2=0$.
  \item $g_i={p^{i+1}}/{s_i}$ for some $s_i\in R^+$ with $v(s_i)=0$ for every $i\geq 1$.
  %
  \item $g_{i+1}=g_i+p^ih_i$ where $h_i$ is a root of the equation $T^2+s_iT+p=0$ for every $i\geq 1$.

\end{enumerate}
In particular, there exists $g\in\widehat{R^+}$ such that $\widehat{R^+}/g$ is not $p$-adically separated.
\end{proposition}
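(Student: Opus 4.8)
The plan is to build the sequence $\{g_i\}$ recursively so that it converges $p$-adically, so that its limit $g$ has $v(g)=\infty$, and so that $g\neq 0$; then the failure of $p$-adic separatedness of $\widehat{R^+}/g$ follows from Theorem~\ref{theorem: completeIntegrallyClosed}, since $v$ restricts to (or dominates, after normalizing) some $v_Q$ with $v_Q(g)=\infty$, so condition (2) fails and hence (1) fails. The first task is to produce $x$: since $\dim(R)\geq 2$ and the center of $v$ on $R^+$ properly contains $(p)$ but is not $\m_{R^+}$, one can choose $x\in\m_{R^+}$ with $p,x$ a regular sequence on $R^+$ and $v(x)=0$ (i.e.\ $x$ avoids the center of $v$); this uses prime avoidance together with the fact that the center has height $\geq 2$ in $R^+/p$, so there is room to pick such an $x$. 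Then $g_1$ is defined as a root of $T^2+xT+p^2=0$ in $R^+$; since $v(x)=0$ and $v(p^2)=2$, the two roots have valuations $0$ and $2$ respectively (by the Newton-polygon/valuation argument: the product of the roots is $p^2$ and one root has valuation $0$ because its reduction mod the center is $-x\neq 0$), so we choose $g_1$ to be the root with $v(g_1)=2$. Writing $g_1=p^2/s_1$ forces $s_1 = p^2/g_1$, which equals $-(x+g_1)\in R^+$ and has $v(s_1)=v(x)=0$. This establishes the base case, matching items (1) and (2).

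For the recursive step, given $g_i=p^{i+1}/s_i$ with $s_i\in R^+$, $v(s_i)=0$, let $h_i\in R^+$ be a root of $T^2+s_iT+p=0$. Again by valuations, the two roots have valuations $0$ and $1$; we take $h_i$ with $v(h_i)=1$, so $h_i = p/u_i$ for some $u_i\in R^+$ with $v(u_i)=0$ (explicitly $u_i=-(s_i+h_i)$). Set $g_{i+1}=g_i+p^ih_i$; then $g_{i+1}\equiv g_i\bmod p^{i+1}$, so the sequence is $p$-adically Cauchy and converges to some $g\in\widehat{R^+}$. I then need to verify item (2) for $i+1$, namely that $g_{i+1}=p^{i+2}/s_{i+1}$ for some $s_{i+1}\in R^+$ with $v(s_{i+1})=0$: compute
\[
g_{i+1}=g_i+p^ih_i=\frac{p^{i+1}}{s_i}+\frac{p^{i+1}}{u_i}=p^{i+1}\cdot\frac{u_i+s_i}{s_iu_i}=p^{i+1}\cdot\frac{-h_i}{s_iu_i}=\frac{p^{i+2}}{s_iu_i^2}\cdot\frac{u_i}{-(-p/u_i)}\cdot\frac{1}{?}
\]
— more cleanly: since $h_i=p/u_i$, we get $p^i h_i = p^{i+1}/u_i$, so $g_{i+1}=p^{i+1}(1/s_i+1/u_i)=p^{i+1}(s_i+u_i)/(s_iu_i)=-p^{i+1}h_i/(s_iu_i)=-p^{i+2}/(s_iu_i^2)$, and one checks $s_iu_i^2$ divides into $R^+$ appropriately — actually we should instead simply \emph{define} $s_{i+1}$ so that $g_{i+1}s_{i+1}=p^{i+2}$ holds; the computation above shows $g_{i+1}=-p^{i+2}/(s_iu_i^2)$, so we set $s_{i+1}=-s_iu_i^2\in R^+$, which satisfies $v(s_{i+1})=v(s_i)+2v(u_i)=0$. (One must double-check the sign conventions and that all quantities genuinely lie in $R^+$; using the defining polynomials rather than division is the clean way to do this.) This completes the induction, giving items (1)--(3).

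It remains to check the two substantive points: $v(g)=\infty$ and $g\neq 0$. For $v(g)=\infty$: from $g_i=p^{i+1}/s_i$ and $v(s_i)=0$ we get $v(g_i)=i+1$, and since $g\equiv g_i\bmod p^{i+1}$ we have $v(g)\geq i+1$ for all $i$ (using the extension of $v$ to $\widehat{R^+}$ defined via limits, exactly as in the Notation paragraph), hence $v(g)=\infty$. For $g\neq 0$: here is where I expect the main obstacle. The point is that $g$ is a genuine limit of a sequence of elements whose valuations under $v$ tend to $\infty$, yet $g$ must be shown nonzero in $\widehat{R^+}$; the whole content of the proposition is that such an element exists, and nonvanishing cannot follow from $v$ alone (indeed $v(g)=\infty$). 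The strategy is to use Lemma~\ref{lemma: v(g) is not infinity}: if $g\neq 0$ then there is \emph{some} $\mathbb{R}$-valuation $w$ centered at $\m_{\widehat{R^+}}$ with $w(g)<\infty$ — but this is a consequence of $g\neq 0$, not a proof of it, so instead I would argue directly. One natural approach: track the sequence $\{g_i\}$ in the $\m_{R^+}$-adic completion of $R^+$, which is a domain by \cite[Theorem 1.5]{HeitmannAICSurprisinglyDomain}, and show the limit there is nonzero using a second valuation $v'$ (extending a discrete valuation on $R/(Q\cap R)$, as in the proof of Lemma~\ref{lemma: HeitmannNote}) along which $v'(g_1)$ is finite and $v'(p^ih_i)$ stays bounded away from $v'(g_1)$, so that $v'(g)=v'(g_1)<\infty$; since $\widehat{R^+}\hookrightarrow$ (the $\m$-adic completion) and the image of $g$ is nonzero there, $g\neq 0$ in $\widehat{R^+}$. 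Making this precise — choosing $x$ and the roots $h_i$ so that the $v'$-valuations genuinely stay controlled, and confirming $v'$ extends compatibly to both completions — is the delicate part of the argument, and is essentially the heart of the construction. Once $g\neq 0$ and $v(g)=\infty$ are in hand, Theorem~\ref{theorem: completeIntegrallyClosed}, $(1)\Leftrightarrow(2)$, immediately gives that $\widehat{R^+}/g$ is not $p$-adically separated, completing the proof.
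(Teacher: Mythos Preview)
Your recursive construction of the $g_i$ is essentially the paper's: choosing $h_i$ with $v(h_i)=1$ is exactly the paper's choice (the paper phrases it as choosing $h_i$ so that $v(p+s_ih_i)=2$, which is equivalent), and your formula $s_{i+1}=-s_iu_i^2$ coincides with the paper's once one unwinds its substitution. One small slip: $g_{i+1}=g_i+p^ih_i$ only gives $g_{i+1}\equiv g_i\bmod p^i$, not $\bmod\, p^{i+1}$ (in fact $h_i\notin pR^+$, by the same kind of argument as below), but $\bmod\, p^i$ already suffices for $p$-adic convergence.

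The genuine gap is at $g\neq 0$, and there you have made the problem much harder than it is. You propose tracking the sequence in the $\m$-adic completion and controlling a second valuation $v'$, calling this ``the delicate part of the argument'' and ``essentially the heart of the construction'', and you leave it unexecuted. The paper dispatches $g\neq 0$ in three lines: since $g\equiv g_1\bmod p$, it suffices to show $g_1\notin pR^+$, and this follows directly from item (1). If $g_1=py$ with $y\in R^+$, then $g_1^2+xg_1+p^2=0$ becomes $p^2y^2+pxy+p^2=0$, so $xy\in pR^+$; since $p,x$ is a regular sequence on $R^+$ this forces $y\in pR^+$, say $y=pz$, and substituting back yields $p^2z^2+xz+1=0$, whence $1\in (p,x)R^+\subseteq \m_{R^+}$, absurd. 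Thus $g\notin p\widehat{R^+}$ and in particular $g\neq 0$. This is precisely where the regular-sequence hypothesis on $p,x$ is used, and it is the whole reason item (1) singles out the specific equation $T^2+xT+p^2$: that equation is engineered so that the regular-sequence hypothesis pins $g_1$ down modulo $p$. No second valuation, no $\m$-adic completion, and no appeal to the domain theorem for that completion is needed for this step.
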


\begin{proof}

For $i=1$, we note that the equation $T^2+xT+p^2=0$ has two roots $u_1,u_2\in R^+$.
Since $v(u_1u_2)=2$ while $v(u_1+u_2)=v(-x)=0$, we may without loss of generality assume $v(u_1)=2$ and $v(u_2)=0$.
%
We set $g_1=u_1$ and $s_1=u_2$; so (2) is satisfied for $i=1$.

Assume we have chosen $g_i$ and $s_i$.
We now show we may choose $g_{i+1}$ to satisfy (2) and (3).
By the condition (3), $h_i$ must be one of the two roots of $T^2+s_iT+p=0$, so all we need to show is that one of the two choices will produce a $g_{i+1}$ that satisfies (2).
Now $$g_{i+1}=g_i+p^ih_i=\frac {p^{i+1}}{s_i}+p^{i}h_i=p^i(\frac{p+s_ih_i}{s_i}).$$
Since $h_i$ is one of the two roots of $T^2+s_iT+p=0$, $s_ih_i$ is one of the two roots of $T^2+s_i^2T+ps_i^2=0$ and so $p+s_ih_i$ is one of the two roots of
\begin{align*}
  0 & =  (T-p)^2+s_i^2(T-p)+ps_i^2\\
   & =T^2+(-2p+s_i^2)T+(p^2-ps_i^2+ps_i^2) \\
   & =T^2+(-2p+s_i^2)T+p^2
\end{align*}
We know that one root of the equation above has value $2$ and the other has value $0$ (since $v(s_i)=0$). It follows that we can choose $h_i$ so that $v(p+s_ih_i)=2$
and with this choice, $$p+s_ih_i=\frac{p^2}{-(-2p+s_i^2)-(p+s_ih_i)}$$
%
and the denominator above (which is the other root) has value $0$. Setting $s_{i+1}$ to be $s_i$ times the denominator above, we have $v(s_{i+1})=0$ and $g_{i+1}={p^{i+2}}/{s_{i+1}}$ as wanted.

We have constructed the desired sequence $\{g_i\}_{i=1}^\infty$ in $R^+$. By (3), the limit $g=\lim_ig_i$ is well-defined as an element in $\widehat{R^+}$. By (2), we know that 
$v(g_i)=i+1$ and so $v(g)=\infty$. By (1), we have $g_1\notin pR^+$: If $g_1=py$, then we have $p^2y^2+pxy+p^2=0$ and thus $xy\in pR^+$, so $y\in pR^+$ as $p, x$ is a regular sequence in $R^+$. But then $g_1=p^2z$ and thus $p^4z^2+p^2xz+p^2=0$, so $p^2z^2+xz+1=0$ a contradiction. Therefore, $g\notin p\widehat{R^+}$ and in particular, $g\neq 0$.
 The last conclusion follows by taking $v=v_Q$ and applying Theorem~\ref{theorem: completeIntegrallyClosed}. 
\end{proof}


\begin{remark}
In fact, when $\dim(R)\geq 2$, we can show that the zero ideal in $\widehat{R^+}$ in not the support of any rank one valuation $v$ whose center contains $p$. By Proposition~\ref{proposition: construction of bad g}, it is enough to handle the case that $v$ is centered on $\m_{R^+}$. We may normalize $v$ so that $v(p)=1$ and choose $x\in R^+$ so that $p,x$ is a regular sequence on $R^+$, without loss of generality, we may also assume that $v(x)>1$. Then by applying Lemma~\ref{lemma: constructing h} below repeatedly and taking limit, we obtain an element $g\in \widehat{R^+}$ so that $g\equiv x$ mod $p$ (in particular, $g\neq 0$) and $v(g)=\infty$.  This fact shows, in the language of \cite[Remark 2.14, Definitions 2.16 and 2.18]{DineTopologicalSpectrum}, that the zero ideal is a spectrally reduced prime ideal of the Tate algebra $S:=\widehat{R^+}[1/p]$ (i.e., $(0)\in \Spec_{\text{Top}}(S)$) that is not the support of any valuation in the Berkovich space $\mathcal{M}(S)$. 
\end{remark}

\begin{lemma}
\label{lemma: constructing h}
Suppose $v$ is any $\mathbb{R}$-valuation on $\widehat{R^+}$ centered on the maximal ideal, normalized so that $v(p)=1$. If $x\in \m_{R^+}$ is such that $v(x)=\lambda>i\geq 1$. Then there exists $h\in R^+$ so that $v(x+p^ih)>i+1$.  
\end{lemma}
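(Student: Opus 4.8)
The statement asks: given a valuation $v$ on $\widehat{R^+}$ centered on $\m_{R^+}$ with $v(p)=1$, and given $x\in\m_{R^+}$ with $v(x)=\lambda>i\geq 1$, produce $h\in R^+$ with $v(x+p^ih)>i+1$. The point is that $x$ itself has $v$-value $\lambda$, which is $\geq i+1$ already unless $i<\lambda<i+1$; the subtle case is exactly when $i<\lambda<i+1$, where $x$ is ``almost but not quite'' divisible by $p^{i+1}$ in the $v$-sense, and we want to correct it by subtracting a multiple $p^ih$ of a smaller power of $p$ so as to cancel the part of $x$ of value exactly $\lambda$. I would first dispense with the easy case: if $\lambda\geq i+1$ just take $h=0$. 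So assume $i<\lambda<i+1$.

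The plan is to find $h\in R^+$ which is a root of a suitable quadratic (or low-degree) equation over $R$ so that $p^ih$ ``matches'' $x$ to higher order. Mimicking the construction in Proposition~\ref{proposition: construction of bad g}, consider the equation
\[
T^2 + \frac{x}{p^i}\,T + p = 0,
\]
or rather, to stay inside $R^+$, clear denominators: since $v(x/p^i)=\lambda-i\in(0,1)$, the element $y:=x/p^i$ lies in $\widehat{R^+}$ but maybe not in $R^+$; instead work with an actual element $x'\in R^+$ with $x'\equiv x$ modulo a high power of $p$ and $v(x')=\lambda$, and consider $T^2 - x'T + p^{i+2}=0$ (after scaling). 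The two roots $h_1,h_2\in R^+$ satisfy $h_1h_2 = p^{i+2}$ (value $i+2$) and $h_1+h_2=x'$ (value $\lambda<i+1$); hence one root, say $h_1$, has $v(h_1)=\lambda$ and the other has $v(h_2)=i+2-\lambda$. Then set $h$ so that $p^ih$ equals the appropriate root (rescaled): the idea is that $x + p^i h$ becomes, up to units of value $0$, expressible as $p^{i+2}/(\text{something of value }0)$ plus controlled error, giving $v(x+p^ih)\geq i+2>i+1$. I would carry this out by direct manipulation of the quadratic, exactly as in the inductive step of Proposition~\ref{proposition: construction of bad g}, tracking values via the product and sum of roots.

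The main obstacle I anticipate is bookkeeping: one must be careful that the chosen root genuinely lies in $R^+$ (automatic, since $R^+$ is integrally closed in $\overline{K}$ and the quadratic has coefficients in $R$), that the substitution of an approximation $x'$ for $x$ does not disturb the value estimate (this is fine because $v(x-x')$ can be made larger than $i+1$), and that after the correction the new error term really has value $>i+1$ rather than exactly $i+1$ — this is where one may need $v(x)>i$ strictly (so $\lambda-i>0$) and possibly one further iteration or a slightly cleverer choice of polynomial. If a single quadratic does not immediately push the value strictly past $i+1$, I would either iterate the construction finitely many times (each step gaining a definite amount), or allow $h$ to be a root of $T^2+sT+p=0$ for an appropriate unit-value element $s$ chosen so that the ``other root'' absorbs the leading term of $x/p^i$ exactly, in direct analogy with how $h_i$ is chosen in Proposition~\ref{proposition: construction of bad g}. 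Everything else — existence of roots in $R^+$, well-definedness of limits — is standard.
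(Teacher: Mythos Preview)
Your reduction to the case $i<\lambda<i+1$ is fine, but the quadratic construction does not do what you need. The roots $h_1,h_2$ of $T^2-xT+p^{i+2}$ lie in $R^+$, but neither is a candidate for $h$: what you require is $h\in R^+$ with $v(h)=\lambda-i\in(0,1)$ and genuine cancellation against $x$, whereas the $h_j$ have values $\lambda$ and $i+2-\lambda$ (or both $(i+2)/2$ when $2\lambda\ge i+2$, which is automatic for $i\ge2$). Taking $h=-h_1/p^i$ fixes the value but there is no reason for $h_1/p^i$ to lie in $R^+$: the monic polynomial it would satisfy is $T^2-(x/p^i)T+p^{2-i}$, whose coefficients need not be integral. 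Your alternative $T^2+sT+p$ with $v(s)=0$ gives roots of values $0$ and $1$, again missing $\lambda-i$, and there is no mechanism forcing cancellation with $x$. The analogy with Proposition~\ref{proposition: construction of bad g} breaks precisely because there the element $g_i$ has the very special shape $p^{i+1}/s_i$ with $v(s_i)=0$, which is what makes the quadratic step go through; a general $x$ with $v(x)\in(i,i+1)$ admits no such expression. Finally, iteration is not obviously available: you have not exhibited a single step that increases the value at all while keeping $h\in R^+$, let alone by a uniform amount.

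The paper's argument avoids all of this by passing to large degree. One lets $h$ be a root of the monic polynomial $T^N+p^iT^2+xT+x^N\in R^+[T]$ (so $h\in R^+$ automatically) and computes that $x+p^ih$ is then a root of a monic degree-$N$ polynomial $f(T)=T^N+\cdots+b_1T+b_0$ with $v(b_0)=N\lambda$ and $v(b_1)=(N-1)i+\lambda$ for $N\gg0$. Hence $v(b_1/b_0)=(N-1)(i-\lambda)<-(i+1)$ once $N-1>(i+1)/(\lambda-i)$. Since $-b_1/b_0$ is the sum of the reciprocals of the roots of $f$, at least one root --- which is $x+p^ih$ for a suitable choice of $h$ --- must have value exceeding $i+1$. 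The essential point is that the small gap $\lambda-i>0$ gets amplified by the factor $N-1$; no fixed-degree construction can achieve this, which is why your quadratic approach cannot be repaired without introducing the same idea.
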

\begin{proof}
Consider the monic polynomial $T^N + p^iT^2 +xT +x^N.$
If $h\in R^+$ is a root of this polynomial, then $x+p^ih$ is a root of the polynomial 
$$f(T):=(T-x)^N + p^{(N-1)i}(T-x)^2 + p^{(N-1)i}x(T-x) +p^{Ni}x^N.$$
Writing $f(T)=T^N + b_{N-1}T^{N-1}+ \cdots + b_1T + b_0$, it is straightforward to check that 
$$b_1 = N(-x)^{N-1} -p^{(N-1)i}x \,\ \text{ and } \,\  b_0= (-x)^N+ p^{Ni}x^N.$$
It follows that $v(b_0)= N\lambda$ and $v(b_1)=(N-1)i+\lambda$ for $N\gg0$. Thus for $N\gg0$,
$$v(b_1/b_0)=(N-1)(i-\lambda)< -i-1.$$
Thus there exists an $h$ so that $v(1/(x+p^ih))< -i-1$, that is, $v(x+p^ih)> i+1$. 
\end{proof}

Finally, we can prove Theorem A from the introduction.

\begin{theorem}
\label{theorem: completeIntegrallyClosedShortVersion}
Let $(R,\m)$ be a Noetherian complete local domain of mixed characteristic $(0,p)$ with fraction field $K$ and its algebraic closure $\overline{K}$. Then $\widehat{R^+}$ is completely integrally closed in $\widehat{R^+}\otimes_{R^+}\overline{K}$, while $\widehat{R^+}$ is completely integrally closed in its own fraction field if and only if $\dim(R)=1$. 
\end{theorem}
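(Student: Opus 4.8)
The plan is to deduce Theorem~\ref{theorem: completeIntegrallyClosedShortVersion} (Theorem A) directly from Theorem~\ref{theorem: completeIntegrallyClosed} (Theorem B) and Proposition~\ref{proposition: construction of bad g}, together with a localization argument that reduces complete integral closedness in a large ring to complete integral closedness in each principal localization $\widehat{R^+}[1/g]$.

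\begin{proof}[Proof of Theorem~\ref{theorem: completeIntegrallyClosedShortVersion}]
We first prove that $\widehat{R^+}$ is completely integrally closed in $T:=\widehat{R^+}\otimes_{R^+}\overline{K}$. Since $\overline{K}=\varinjlim_g R^+[1/g]$ where $g$ ranges over nonzero elements of $R^+$, we have $T=\varinjlim_g \widehat{R^+}[1/g]$, a directed union of subrings of $T$. Thus if $z\in T$ is almost integral over $\widehat{R^+}$, then $z\in \widehat{R^+}[1/g]$ for some nonzero $g\in R^+$, and $z$ is almost integral over $\widehat{R^+}$ inside $\widehat{R^+}[1/g]$. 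It therefore suffices to check that $\widehat{R^+}$ is completely integrally closed in $\widehat{R^+}[1/g]$ for every nonzero $g\in R^+$. By Theorem~\ref{theorem: completeIntegrallyClosed}, $(1)\Leftrightarrow(4)$, this holds provided $\widehat{R^+}/g$ is $p$-adically separated for $g\in R^+$. To see the latter, note that since $g\in R^+$ it lies in some module-finite normal domain extension $S$ of $R$ inside $R^+$; then $v_Q(g)<\infty$ for every height one prime $Q\in\Spec(R^+)$ that contains $p$ (because $g$ is a nonzero element of the Noetherian normal domain $S$, so $v_{Q}(g)=v_{Q\cap S}(g)$ is finite, the restriction being a genuine discrete valuation on $S$). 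By Theorem~\ref{theorem: completeIntegrallyClosed}, $(2)\Rightarrow(1)$, $\widehat{R^+}/g$ is $p$-adically separated. This proves the first assertion.

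Next we treat complete integral closedness in the fraction field $L$ of $\widehat{R^+}$. If $\dim(R)=1$, then $R^+$ is a rank one valuation ring and hence so is $\widehat{R^+}$; a valuation ring is completely integrally closed in its fraction field (its complete integral closure agrees with its integral closure, which is itself), so the claim holds. Suppose now $\dim(R)\geq 2$. We argue that $\widehat{R^+}$ is not completely integrally closed in $L$. Pick any height one prime $Q\in\Spec(R^+)$ that contains $p$ with $Q\neq \m_{R^+}$ (such $Q$ exists since $\dim(R^+)=\dim(R)\geq 2$ and $R^+$ is a domain in which $p$ is neither a unit nor generates $\m_{R^+}$), and pick $x\in\m_{R^+}$ with $p,x$ a regular sequence on $R^+$ and $x\notin Q$. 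By Proposition~\ref{proposition: construction of bad g}, applied with $v=v_Q$, there exists a nonzero $g\in\widehat{R^+}$ with $v_Q(g)=\infty$; in particular $\widehat{R^+}/g$ is not $p$-adically separated, because $g$ lies in the $p$-adic closure of the zero ideal of $\widehat{R^+}/g$ is witnessed by the construction (concretely, the element $f$ produced in the proof of Lemma~\ref{lemma: HeitmannNote}, or directly: one checks $0\neq f\in\cap_n(g,p^n)\widehat{R^+}$ while $f\notin g\widehat{R^+}$). By Theorem~\ref{theorem: completeIntegrallyClosed}, $(1)\Leftrightarrow(4)$, $\widehat{R^+}$ is \emph{not} completely integrally closed in $\widehat{R^+}[1/g]\subseteq L$, and hence not completely integrally closed in $L$. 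This completes the proof.
\end{proof}

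The only subtle point is the bookkeeping in the forward direction: one must be careful that "completely integrally closed in the fraction field" genuinely fails as soon as it fails in some $\widehat{R^+}[1/g]$, which is immediate since $\widehat{R^+}[1/g]\subseteq L$ and an element almost integral over $\widehat{R^+}$ inside $\widehat{R^+}[1/g]$ is a fortiori almost integral over $\widehat{R^+}$ inside $L$. Everything else is a direct invocation of the already-proved Theorem~\ref{theorem: completeIntegrallyClosed}, Proposition~\ref{proposition: construction of bad g}, and the elementary fact that $\overline{K}$ (resp.\ $\widehat{R^+}\otimes_{R^+}\overline K$) is a directed union of the localizations $R^+[1/g]$ (resp.\ $\widehat{R^+}[1/g]$) for $0\neq g\in R^+$; I expect no real obstacle beyond verifying that $g\in R^+$ forces $\widehat{R^+}/g$ to be $p$-adically separated, which follows from finiteness of discrete valuations on Noetherian normal domains combined with $(2)\Rightarrow(1)$ of Theorem~\ref{theorem: completeIntegrallyClosed}.
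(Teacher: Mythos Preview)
Your proof is correct and follows essentially the same approach as the paper: reduce the first assertion to the case $g\in R^+$ via the directed-union description of $\widehat{R^+}\otimes_{R^+}\overline{K}$, verify condition~(2) of Theorem~\ref{theorem: completeIntegrallyClosed} for such $g$, and for $\dim(R)\geq 2$ combine Proposition~\ref{proposition: construction of bad g} with $(1)\Leftrightarrow(4)$ of Theorem~\ref{theorem: completeIntegrallyClosed}. The paper's version is terser (it just says condition~(2) is ``obvious'' for $g\in R^+$), but the logic is identical; your only slips are expository---the sentence explaining why $\widehat{R^+}/g$ is not $p$-adically separated is garbled (just cite $(2)\Rightarrow(1)$ directly, since $v_Q(g)=\infty$), and the parenthetical claim that an arbitrary valuation ring is completely integrally closed is false in rank $\geq 2$, though true in the rank one case you actually need.
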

\begin{proof}
If $\dim(R)=1$, then $\widehat{R^+}$ is a valuation ring of rank one. In particular $\widehat{R^+}$ is completely integrally closed in its fraction field. Suppose $\dim(R)\geq 2$. If $g\in R^+$, then condition $(2)$ in Theorem~\ref{theorem: completeIntegrallyClosed} is obvious and the theorem follows from Theorem~\ref{theorem: completeIntegrallyClosed} and Proposition~\ref{proposition: construction of bad g}.
\end{proof}

In connection with Theorem~\ref{theorem: completeIntegrallyClosedShortVersion}, the following question is very natural to ask, and we do not know the answer.

\begin{question}
Let $(R,\m)$ be a Noetherian complete local domain of mixed characteristic $(0,p)$ with $\dim(R)\geq 2$. Is $\widehat{R^+}$ integrally closed in its fraction field?
\end{question}

\bibliographystyle{skalpha}
\bibliography{CommonBib}
\end{document}